\colorlet{darkgreen}{green!40!black}
\DeclareMathOperator{\Hopf}{\mathcal H}
\DeclareMathOperator{\DMod}{DMod}
\DeclareMathOperator{\frob}{frob}
\newcommand{\DRing}{\mathcal R}
\newcommand{\HGraph}{\mathcal H\mathcal G}
\DeclareMathOperator{\height}{ht}
\DeclareMathOperator{\order}{o}
\DeclareMathOperator{\Sig}{Sig}
\title{On the structure of abelian Hopf algebras}
\author{Tilman Bauer}
\thanks{The author would like to thank the Mittag-Leffler Institute for supporting this research.} 
\subjclass[2020]{57T05,16T05}
\keywords{Hopf algebras, Dieudonné theory, classification}
\date{July 2, 2024}
\begin{document}
\begin{abstract}We study the structure of the category of graded, connected, commutative and cocommutative Hopf algebras over a perfect field $k$ of characteristic $p$. Under a pure-injectivity condition, which in particular is always fulfilled for Hopf algebras of finite type, every $p$-torsion object in this category is uniquely a direct sum of explicitly given indecomposables. This classification is essentially equivalent to the classification of nonnegatively graded $k[s,t]/(st)$-modules, where $|s|=1$ and $|t|=-1$.
It gives rise to a similar classification of not necessarily $p$-torsion objects that are either free as commutative algebras or cofree as cocommutative coalgebras. 
\end{abstract}

\maketitle
\section{Introduction}

Building on work of Hopf \cite{hopf:Gruppenmannigfalten} and Borel \cite{borel:homologie-des-groupes-de-Lie}, Milnor and Moore's classical paper \cite{milnor-moore:hopf} described the the structure of graded, connected, commutative Hopf algebras over a field $k$ \emph{as algebras} and a classification \emph{as Hopf algebras} in the case where they are primitively generated. In the simplest case, when the characteristic of $k$ is $0$ and the Hopf algebras in question are abelian (i.e., commutative and cocommutative), this is the end of the story. In that case, the primitives and indecomposables coincide and thus $H$ is primitively generated. However, a classification in positive characteristic seemed less approachable. Some years later, Schoeller \cite{schoeller:Hopf} observed that the category of abelian Hopf algebras over a perfect field $k$ of characteristic $p$ is abelian and thus, by the Freyd--Mitchell embedding theorem, isomorphic to the category of graded modules over a ring, which she described. This is Dieudonné theory in a graded setting. 

Although Dieudonné theory has been very successfully applied in various topological and algebraic contexts, it seems that the original question of classification has fallen into a bit of neglect. This paper aims to fix this to a certain extent.

\medskip

Let $k$ be a perfect field of characteristic $p$. Let $\Hopf$ be the category of nonnegatively graded, connected (i.e. $H_0=k$), abelian Hopf algebras over $k$. We will refer to objects of $\Hopf$ simply as ``Hopf algebras.''

Our main classification result concerns Hopf algebras $H$ that are $p$-torsion in the sense that the multiplication-by-$p$ map $[p]$ in the abelian group $\End(H)$ is trivial, and that satisfy the following condition, which uses the Frobenius ($p$th power) map $F\colon H_i \to H_{pi}$ and its dual, the Verschiebung $V\colon H_{pi} \to H_i$:

\begin{defn}
A $p$-torsion Hopf algebra $H$ over $k$ is \emph{pure-injective} if the following condition holds: For each sequence $(n_i)_{i \geq 0}$ of nonnegative numbers and each primitive element $x \in P(H)$, if a solution $\{x_i\}$ exists for every finite subset of the set of equations
\[
x_0 = x; \; V(x_i)=F^{n_i}x_{i-1}
\]
then a solution $\{x_i\}$ for the whole system exists.
\end{defn}

Note that any Hopf algebra of finite type is pure-injective, as is any injective Hopf algebra. This is not the usual categorical definition of pure-injectivity, but it is equivalent to it, as we will show.

\begin{thm}\label{thm:modpclassification}
Every pure-injective $p$-torsion Hopf algebra over a perfect field of characteristic $p$ decomposes uniquely into a tensor product of indecomposable Hopf algebras. A pure-injective $p$-torsion Hopf algebra is indecomposable if and only if it is isomorphic to a Hopf algebra $H(r,m,I)$ described below, indexed by a tuple $(r,m,I)$ with $r \in \N$, $m \in \N_0 \cup \{\infty\}$, $I \subset \{1,\dots,m\}$ (resp. $I \subset \N$ if $m=\infty$).
\end{thm}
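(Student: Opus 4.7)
The plan is to transport the problem across Schoeller's equivalence and classify $p$-torsion objects as modules over a combinatorial quotient of the Dieudonné ring. Write $\DRing$ for the graded Dieudonné ring, so that $\Hopf^\omega$ is equivalent to the category of countable-dimensional graded $\DRing$-modules. The two distinguished elements $F$ (Frobenius) and $V$ (Verschiebung) of $\DRing$ satisfy $FV = VF = p$, so a $p$-torsion Hopf algebra corresponds to a graded $\DRing$-module $M$ with $FV = VF = 0$, equivalently a module over the quotient $\bar\DRing := \DRing/(p)$. This reduces the theorem to classifying countable-dimensional graded $\bar\DRing$-modules up to isomorphism and exhibiting a unique decomposition into indecomposables.

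Next I would describe $\bar\DRing$-modules as representations of a directed graph $\HGraph$ whose vertices are the admissible homogeneous degrees and whose arrows between consecutive degrees are labelled by $F$ and $V$, subject to $FV = VF = 0$. Because of this last relation the action of $F$ and $V$ at every vertex factors through one-dimensional images, so $\bar\DRing$ is of ``string type'' and its indecomposable modules correspond combinatorially to maximal alternating zig-zag paths in $\HGraph$. Such a path is specified by a starting vertex (encoded by an integer $r \in \N_0$), a length $m \in \N_0 \cup \{\infty\}$, and, at each of the $m$ steps, a binary choice between an $F$-arrow and a $V$-arrow, encoded by a subset $I \subset \{1,\dots,m\}$ (resp.\ $I \subset \N$). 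These are exactly the parameters of the claimed indecomposables $H(r,m,I)$, so the classification of indecomposables reduces to reading off this data from a zig-zag.

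To complete the proof I would verify that each $H(r,m,I)$ has a local endomorphism ring: any endomorphism is determined by the image of a base element of the string, and all non-invertible endomorphisms shift along the zig-zag and are therefore nilpotent. In the finite-dimensional case this gives Krull--Schmidt and hence uniqueness directly. I expect the main obstacle to be the extension to arbitrary countable-dimensional objects, where Krull--Schmidt is not automatic; the plan is to filter $H$ by finite-dimensional sub-Hopf algebras, apply Krull--Schmidt at each stage, and pass to the colimit using Azumaya's theorem and the locality of the endomorphism rings to ensure that the strings appearing at each finite stage can be matched coherently. The combinatorial rigidity of zig-zag strings (a string is determined by any of its vertices together with the directions of emanating arrows) should make the resulting cofinality argument clean.
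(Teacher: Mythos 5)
Your overall skeleton is the same as the paper's (Dieudonné equivalence, reduction to graded modules over $\DRing/(p)\cong k[s,t]/(st)$, string-like indecomposables $M(m,I)$, Azumaya-type uniqueness via local endomorphism rings), but the two steps that carry the actual content of the theorem are asserted rather than proved. First, the claim that every indecomposable countable-dimensional graded $k[s,t]/(st)$-module is a zig-zag string module is not a consequence of the one-line heuristic ``$\bar\DRing$ is of string type''; the classification of indecomposables over string algebras is a genuine theorem (Gelfand--Ponomarev/Ringel in finite dimension, Crawley-Boevey in infinite dimension), it requires ruling out band modules (here the grading does this, but it must be said), and invoking it wholesale is essentially Touz\'e's finite-type argument, which the paper explicitly sets out to replace with a self-contained one valid for countable-dimensional objects. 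Second, and more seriously, existence of the decomposition is exactly the hard point, and your proposed mechanism for it does not work.

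The plan ``filter $H$ by finite-dimensional subobjects, apply Krull--Schmidt at each stage, and match the strings coherently in the colimit'' founders on the fact that finite-dimensional submodules need not be pure, so their indecomposable summands need not be, or extend to, summands at later stages: already $\langle x_1\rangle\subset M(1,\{1\})$ (where $x_1=sx_0$) is an indecomposable string submodule that is not a direct summand, so the decompositions of an exhausting chain $M_1\subset M_2\subset\cdots$ are in general pairwise incompatible and Azumaya's theorem provides no map between them to match against. Some essential use of countability is unavoidable, since by Example~\ref{ex:uncountablecounterex} the decomposition genuinely fails for uncountable modules, and your matching argument never visibly uses it. What is missing is precisely the paper's technical core: the notion of purity (Lemma~\ref{lemma:puritychar}, Prop.~\ref{prop:puritychar}), the well-ordering of the indices $(m,I)$ and the resulting \emph{initial} submodules, and the theorem that an initial submodule always admits a retraction (Thm.~\ref{thm:indecomposablesplitoff}); existence then follows by splitting off one initial summand at a time, passing to the complement, and iterating countably many times (Cor.~\ref{cor:indecdecompositionmodp}), which simultaneously yields the classification of indecomposables (Cor.~\ref{cor:indecomposablecharacterization}) without any appeal to string-algebra theory. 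Two smaller points: your identification of $p$-torsion objects with $\DRing/(p)$-modules glosses over the Frobenius-semilinearity of $F$ and $V$, which the paper removes via the $p$-typical splitting and perfectness of $k$ (Lemma~\ref{lemma:splitting}); and a complete proof must also verify that the modules $M(m,I)$ correspond to the explicitly presented Hopf algebras $H(r,m,I)$, which your outline omits.
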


For Hopf algebras of finite type, this theorem was proven by Touzé \cite[Section 9]{touze:exponential} in the context of ``exponential functors,'' using results from \cite{crawley-boevey:string-algebras} on representations of string algebras. 

Similarly to the classification of finitely generated modules over PIDs, the uniqueness statement is not (falsely) claiming that the decomposition is natural. It is instead to be understood as a Krull--Schmidt property, i.e. that the unordered sequence of tuples $(r,m,I)$ occurring in the decomposition into indecomposables is the same for every such decomposition.

Explicitly, the Hopf algebra $H(r,m,I)$ is given as an algebra by
\[
H(r,m,I) = k[x_0,x_1,\dots,x_m]/\Biggl(x_{i-1}^p - \begin{cases} x_{i}; &i \in I\\0;& i \not\in I\end{cases}\Biggr), \quad |x_i| = rp^i,
\]
and it has a unique coalgebra structure that makes it indecomposable as a Hopf algebra. Note that $H(r,m,I)$ has dimension $1$ in degrees $0,r,2r,\dots,(p^{m+1}-1)r$ (resp. in degrees $r,2r,\dots$ if $m=\infty$) and is trivial in all other dimensions.

Non-pure-injective $p$-torsion Hopf algebras can be more complicated; in particular, there are indecomposables that are not of finite type (Example~\ref{ex:nonpureinjective}). 

Similarly, a satisfying classification does not exist for general (non-$p$-torsion) Hopf algebras, even under a pure-injectivity condition. There are indecomposable Hopf algebras of arbitrarily large dimension in any given degree (Example~\ref{ex:largeindecomposables}). The situation greatly improves if $H$ is free as a (commutative) algebra or cofree as a (cocommutative) coalgebra:

\begin{thm}\label{thm:freeorcofreeconcrete}
Given any tuple $(r,m,I)$, there exist
\begin{itemize}
\item a unique Hopf algebra $H_f(r,m,I)$ which is free as an algebra, and 
\item a unique Hopf algebra $H_c(r,m,I)$ which is cofree as a coalgebra
\end{itemize}
such that $H_f(r,m,I)/[p] \cong H(r,m,I) \cong H_c(r,m,I)/[p]$. Any Hopf algebra which is free as an algebra or cofree as a coalgebra and whose mod-$[p]$ reduction is pure-injective, decomposes uniquely into a tensor product of Hopf algebras isomorphic to $H_f(r,m,I)$ (resp. $H_c(r,m,I)$).
\end{thm}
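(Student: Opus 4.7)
The plan is to translate Theorem~\ref{thm:freeorcofreeconcrete} into a statement about modules over the graded Dieudonné ring $\DRing$ via Schoeller's equivalence $\Hopf \xrightarrow{\sim} \DMod$. Under this equivalence, the mod-$p$ reduction $H \mapsto H/[p]$ corresponds to $M \mapsto M/pM$, and Theorem~\ref{thm:modpclassification} therefore classifies $\DRing$-modules killed by $p$. The first task is to identify the Dieudonné-theoretic incarnation of the conditions ``free as a commutative algebra'' and ``cofree as a cocommutative coalgebra.'' These should translate into injectivity conditions on the two operators $F$ and $V$ of $\DRing$ (one condition for each of the two dual properties); this can be checked on standard examples such as polynomial and divided-power algebras.

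Next, I would establish existence and uniqueness of the building blocks $H_f(r,m,I)$ and $H_c(r,m,I)$. On the module side, writing $M(r,m,I)$ for the Dieudonné module of $H(r,m,I)$, I would construct a lift $M_f(r,m,I)$ satisfying the relevant injectivity condition and reducing to $M(r,m,I)$ modulo $p$; the construction is essentially forced by ``unfolding'' the relations of $M(r,m,I)$ along the given operator into a free chain. Uniqueness follows because the $p$-adic filtration of any such lift, being separated by the injectivity hypothesis, is rigidly determined layer-by-layer by the mod-$p$ reduction together with the $F$- or $V$-action.

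The main step is then to lift the mod-$p$ decomposition of Theorem~\ref{thm:modpclassification} to a decomposition of $H$ itself. Given $H$ free as an algebra, we obtain
\[
M(H)/p M(H) \;\cong\; \bigoplus_j M(r_j,m_j,I_j),
\]
and the task is to show this decomposition lifts uniquely to $M(H)$. Each summand of the lift is automatically a lift satisfying the required injectivity, hence must equal the unique $M_f(r_j,m_j,I_j)$ constructed in the previous step. Uniqueness of the decomposition of $H$ then reduces to the uniqueness asserted in Theorem~\ref{thm:modpclassification}.

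I expect the main obstacle to be precisely this lifting of decompositions from $M/pM$ to $M$. For finitely generated modules a version of Nakayama's lemma suffices, but here countable dimension over $k$ permits non-finitely-generated $\DRing$-modules, so the naive argument breaks down (cf.\ Example~\ref{ex:uncountablecounterex} for the need of the countability hypothesis even on the $p$-torsion side). To handle this, I would exploit the filtration by powers of the appropriate operator, realizing each $\DRing$-module satisfying the injectivity hypothesis as an inverse limit and lifting orthogonal systems of idempotents layer-by-layer. The cofree case then follows by the parallel argument with $F$ and $V$ interchanged, or alternatively by invoking a Cartier-duality-like symmetry of $\DMod$ that swaps the two operators and exchanges the ``free'' and ``cofree'' hypotheses.
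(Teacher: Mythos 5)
Your overall frame (pass to Dieudonné modules, characterize freeness/cofreeness by conditions on $F$ and $V$, construct unique lifts of $M(m,I)$, then lift the mod-$p$ decomposition) matches the paper's, and your treatment of the building blocks parallels the paper's Theorem~\ref{thm:classificationofbasicmodules}. The genuine gap is in your main step. You propose to lift the decomposition of $M/pM$ by realizing $M$ as an inverse limit along the filtration and lifting orthogonal idempotents layer by layer, with freeness entering only afterwards to identify the summands. This cannot work: lifting decompositions from $M/pM$ to $M$ is false unless injectivity of $s$ is used \emph{inside} the lifting argument. The paper's final example (Section 5) produces, via classes in $\Ext^1\bigl(\bigoplus_i M(\Gamma_i),\Sigma k\bigr)$, indecomposable $\DRing$-modules of arbitrarily large size whose mod-$p$ reductions decompose completely by Theorem~\ref{thm:modpclassification}; for such modules no decomposition lifts at all, so any ``generic'' lifting machinery must break. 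Moreover, the completeness you want to exploit is automatic and toothless: every nonnegatively graded $\DRing$-module satisfies $p^{i+1}M_i = s^{i+1}t^{i+1}M_i = 0$, so each degree is bounded $p$-torsion; the obstruction to lifting idempotents is not convergence but that the projections in $\End(M/pM)$ need not lie in the image of $\End(M)\to\End(M/pM)$ --- and arranging that is essentially the statement to be proved. (Countability, which you flag as the main obstacle here, is not the issue at this step; it is needed only to obtain the mod-$p$ decomposition in the first place.)

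The paper's argument (Theorem~\ref{thm:freeorcofree}) uses freeness exactly where your sketch does not. It lifts each summand inclusion $\bar\gamma_i\colon \Sigma^{r_i}M(m_i,I_i)\to M/(p)$ to a map $\gamma_i\colon \Sigma^{r_i}M(\Gamma_i)\to M$ from a basic module (by inductively lifting generators, using $p=st$ to correct the choices), gets surjectivity of $\bigoplus_i\gamma_i$ from graded Nakayama (valid by the bounded torsion above), and then proves injectivity by induction on the degree: if $\sum_i\alpha_i x_{i,j}=0$, mod-$p$ independence of the $x_{i,j}$ gives $p\mid\alpha_i$; writing $p=st$ and cancelling the injective $s$ yields $\sum_i(\alpha_i/p)\,t x_{i,j}=0$, to which the inductive hypothesis applies because $tx_{i,j}\in\{x_{i,j-1},px_{i,j-1}\}$. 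This induction is the crux of the whole theorem, and it is precisely what your proposal lacks. Once that decomposition into basic modules is in hand, the remaining identification of $H_f$ and $H_c$ proceeds as you suggest, via the uniqueness of the basic Hopf graph with the relevant injectivity property --- though note that by the paper's Lemma the cofree condition is \emph{surjectivity} of $V$, not injectivity; the cofree case is then handled by dualization, as you indicate.
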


The condition of being free as an algebra is strictly weaker than being free over a coalgebra, or being a projective object in Hopf algebras.

In the general case, we can at least classify all Hopf algebras $H$ with a given indecomposable reduction modulo $[p]$. We define:

\begin{defn}
A \emph{basic Hopf graph} is a (finite or infinite) connected chain of arrows with end points in $\N_0 \times \N_0$ with the following properties:
\begin{enumerate}
	\item Every arrow goes either from $(i,j)$ to $(i+1,j)$, from $(i+1,j)$ to $(i,j)$, from $(i,j+1)$ to $(i+1,j)$, or from $(i+1,j+1)$ to $(i,j)$.
	\item There is an arrow with end point $(0,0)$.
	\item Every arrow shares each of its end point with exactly one other arrow, unless the end point lies on the $x$-axis (i.e. is of the form $(i,0)$), in which case it may share its end point with no other arrow.
\end{enumerate}
Denote by $\HGraph$ the set of basic Hopf graphs.
\end{defn}

Thus a basic Hopf graph looks something like this:

\[
\begin{matrix}
\begin{tikzpicture}[scale=0.5]
\draw[gray,very thin] (0,0) grid (5,3);
\draw[stealth-] (0,0) -- (1,1);
\draw[-stealth] (1,1) -- (2,1);
\draw[stealth-] (2,1) -- (3,1);
\draw[-stealth] (3,1) -- (4,0);
\draw[->](1,-1) -- (3,-1) node[midway,below]{$i$};
\draw[->](-1,1)-- (-1,3) node[midway,left]{$j$};
\end{tikzpicture}
\end{matrix} \qquad \text{or} \qquad
\begin{matrix}
\begin{tikzpicture}[scale=0.5]
\draw[gray,very thin] (0,0) grid (5,3);
\draw[-stealth] (0,0) -- (1,0);
\draw[stealth-] (1,0) -- (2,1);
\draw[stealth-] (2,1) -- (3,2);
\draw[dotted] (3,2) -- (4,3);
\draw[->](1,-1) -- (3,-1) node[midway,below]{$i$};
\draw[->](-1,1)-- (-1,3) node[midway,left]{$j$};
\end{tikzpicture}
\end{matrix}
\]

To any basic Hopf graph $\Gamma$ and natural number $r$, we can associate a triple $(r,m_\Gamma,I_\Gamma)$ as in Theorem~\ref{thm:modpclassification}, where $m_\Gamma$ is the horizontal length of $\Gamma$ and $i \in I_\Gamma$ iff there is a right or right-down pointing arrow from $i-1$ to $i$ in $\Gamma$.

\begin{thm}\label{thm:basichopfclassification}
For every basic Hopf graph $\Gamma$ and $r \geq 1$, there exists a unique Hopf algebra $H=H(r,\Gamma)$ such that $H/[p] \cong H(r,m_\Gamma,I_\Gamma)$ and with the properties:
\begin{enumerate} 
\item $V\colon H_{rp^i} \to H_{rp^{i-1}}$ is injective iff $\Gamma$ contains an arrow from $(i,j)$ to $(i-1,j)$ or an arrow from $(i-1,j)$ to $(i,j-1)$;
\item $F\colon H_{rp^{i-1}} \to H_{rp^i}$ is injective iff $\Gamma$ contains an arrow from $(i-1,j)$ to $(i,j)$ or an arrow from $(i,j)$ to $(i-1,j-1)$.
\end{enumerate}
Any Hopf algebra whose mod-$[p]$ reduction is isomorphic to some $H(r,m,I)$ is itself isomorphic to $H(r,\Gamma)$ for exactly one pair $(r,\Gamma)$. 
\end{thm}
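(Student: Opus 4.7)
The plan is to translate the problem to Dieudonné theory via Schoeller's equivalence $\Hopf \simeq \DMod$, where $\DMod$ is the category of graded modules over the Dieudonné ring carrying operators $F$ and $V$ with $FV = VF = p$. Under this equivalence, mod-$[p]$ reduction corresponds to reducing the module modulo $p$, and Thm~\ref{thm:modpclassification} identifies the $p$-torsion indecomposables: the module $M_0(r,m,I) = D(H(r,m,I))$ has a basis $v_0,\dots,v_m$ with $|v_i| = rp^i$ on which $F$ and $V$ act as explicit shifts dictated by $I$. Since multiplying all degrees by $r$ corresponds to the regrading $H(\Gamma) \mapsto H(r,\Gamma)$, it suffices to work with $H(\Gamma)$ itself and classify Dieudonné modules $M$ with $M/pM$ of this form.

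For existence of $H(\Gamma)$, I would construct its Dieudonné module $M(\Gamma)$ directly from the graph: the graded basis is indexed by the vertex set, with $e_{(i,j)}$ placed in degree $p^i$, and the operators are prescribed by arrows. A rightward arrow $(i,j) \to (i+1,j)$ encodes $F(e_{(i,j)}) = e_{(i+1,j)}$; a leftward arrow $(i+1,j) \to (i,j)$ encodes $V(e_{(i+1,j)}) = e_{(i,j)}$; the two diagonal arrow types encode analogous relations but with an extra factor of $p$, reflecting the jump in $p$-adic valuation tracked by the height coordinate $j$. The ``exactly two arrows per interior vertex'' axiom ensures $F$ and $V$ are defined on every generator in a manner compatible with $FV = VF = p$, so $M(\Gamma)$ is a valid Dieudonné module. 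A direct computation on basis elements then confirms $M(\Gamma)/pM(\Gamma) \cong M_0(0,m_\Gamma,I_\Gamma)$ and that the injectivity pattern of $F$ and $V$ on the graded pieces in degrees $p^{i-1}$ and $p^i$ matches conditions (1) and (2) precisely.

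For uniqueness of $H(\Gamma)$ and completeness, given any Hopf algebra $H$ whose mod-$[p]$ reduction is indecomposable, I would reconstruct both $r$ and $\Gamma$ from $M = D(H)$. The factor $r$ is read off from the grading of $M/pM$ via Thm~\ref{thm:modpclassification}; after regrading, lift a basis $\bar v_0,\dots,\bar v_m$ of $M/pM$ to $\tilde v_0,\dots,\tilde v_m \in M$ and build the graph by iterating $F$ and $V$ on each lift. The height coordinate of a vertex becomes the $p$-adic valuation of the corresponding element, and whether an arrow is horizontal or diagonal is determined by whether the iterate stays at the same valuation or jumps by one. The injectivity hypotheses (1)--(2) rigidly prescribe which arrows must occur, so $\Gamma$ is uniquely determined; a dimension count over the countable graded pieces then shows that the induced map $M(\Gamma) \to M$ sending each $e_{(i,j)}$ to the appropriate $p^j$-multiple of an iterate of some $\tilde v_\ell$ is an isomorphism.

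The main obstacle will be verifying that the combinatorial data extracted from an arbitrary Dieudonné module always assembles into a genuine basic Hopf graph---that the path stays in $\N_0 \times \N_0$, is connected, has an arrow at $(0,0)$, and satisfies the ``exactly two arrows per interior vertex'' rule. This is the non-$p$-torsion analogue of the string-module classification of Crawley-Boevey \cite{crawley-boevey:string-algebras} that underlies Touzé's proof of Thm~\ref{thm:modpclassification}, now enriched by the $p$-adic filtration recorded in the $y$-coordinate; the countability assumption on $H$ is what guarantees that each staircase terminates in a well-defined vertex rather than extending pathologically.
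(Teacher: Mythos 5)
Your overall route---pass to Dieudonn\'e modules via Schoeller's equivalence, build $M(\Gamma)$ with one generator per column of $\Gamma$ and operators dictated by the arrows, then reconstruct $(r,\Gamma)$ from an arbitrary Hopf algebra with indecomposable reduction and conclude by regrading---is exactly the paper's. But the execution has two genuine gaps, both centered on the role of the height coordinate $j$. First, your $M(\Gamma)$ is never actually defined, and the rule you do state is wrong. You give generators $e_{(i,j)}$ and relations read off from arrows, but never say what each graded piece is as a $W(k)$-module. It cannot be free (in a nonnegatively graded $\DRing$-module, $tM_0=0$ forces $pM_0=stM_0=0$, and inductively $p^{i+1}M_i=0$), and a $k$-basis indexed by vertices gives a $p$-torsion module, which is wrong whenever $\Gamma$ leaves the $x$-axis. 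The correct definition, which is the paper's, is that $M(\Gamma)_i$ is the \emph{cyclic} module $W(k)/p^{j+1}$ where $j$ is the height of the column-$i$ vertex: the $y$-coordinate enters through truncation orders, not through factors of $p$ in the relations. Relatedly, your prescription that diagonal arrows carry ``an extra factor of $p$'' yields the wrong mod-$p$ reduction: take $\Gamma$ to consist of left-down diagonal arrows only (the graph of $\Lambda_p$, Example~\ref{ex:gammap}), so $I_\Gamma=\emptyset$. Your rule gives $V(e_{i+1})=pe_i$, and $FV=p$ then forces $F(e_i)=e_{i+1}$ up to torsion; mod $p$ this has surjective Frobenius and vanishing Verschiebung, i.e.\ reduction $M(\infty,\N)$ rather than $M(\infty,\emptyset)=M(m_\Gamma,I_\Gamma)$. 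In the correct dictionary, the operator pointing \emph{along} the arrow always sends generator to generator (this is what records $I_\Gamma$), the opposite operator picks up the factor $p$, and horizontal versus diagonal is recorded by whether the truncation order stays the same or drops by one---equivalently, whether the along-the-arrow operator is injective, which is what makes properties (1)--(2) come out right. Even on the most charitable reading of your rule (factor of $p$ on the companion operator rather than the stated one), the construction remains undefined without the truncation data.

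Second, the reconstruction step. ``The height coordinate of a vertex becomes the $p$-adic valuation of the corresponding element'' is not a correct recipe: generators have valuation $0$, and the valuations of the $F$/$V$-iterates of a single lift do not reproduce $v_\Gamma$ (for $v_\Gamma=(0,1,1,1,0)$ as in Fig.~\ref{fig:hopfgraphone}, iterating $F$ from $\tilde x_0$ gives valuations $0,1,1$ and then the iterate vanishes). What is missing is the linchpin of the paper's argument: for a basic module $M$, each graded piece is \emph{automatically} cyclic, $M_i\cong W_{v_i+1}(k)$, because $p^{i+1}M_i=0$ (again from $p=st$ and connectivity) while $M_i/pM_i$ is one-dimensional. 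Once cyclicity is known, one constructs lifts compatibly and inductively ($\tilde x_i:=s\tilde x_{i-1}$ if $i\in I$, otherwise any $t$-preimage of $\tilde x_{i-1}$, using that $t$ is then surjective), reads off $v_i$ as the lengths of the graded pieces, and checks these satisfy the defining inequalities of a basic Hopf graph; the injectivity pattern of $s$ and $t$ then pins down $\Gamma$ uniquely. This is elementary bookkeeping: no string-algebra input is needed, and countability plays no role whatsoever in this theorem (infinite graphs are perfectly admissible, and a basic module is one-dimensional mod $p$ in each degree, hence automatically small). Your closing appeal to countability to ``terminate the staircase'' is a red herring.
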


Once translated into statements about Dieudonné modules, none of the methods and proofs in this paper use particularly sophisticated techniques.

An early attempt at classifying Hopf algebras was made in \cite{wraith:abelian-hopf-algebras}, but that paper contains some key errors that invalidate its main results, such as that the category of Hopf algebras does not have projectives or injectives (it does), that the homological dimension is $1$ (it is $2$ \cite{schoeller:Hopf}), and a classification of indecomposable objects. One such error is the assumption that two Hopf algebras which are isomorphic both as algebras and as coalgebras, must be isomorphic as Hopf algebras \cite[4.9]{wraith:abelian-hopf-algebras}. A counterexample to this is Kuhn's ``cautionary example'' \cite[Example 1.13]{kuhn:quasi-shuffle}.

\begin{remark}
If $H$ is a \emph{graded-commutative} Hopf algebra and $\operatorname{char}(k)>2$ then there is a natural splitting $H = H^{\operatorname{even}} \otimes H^{\operatorname{odd}}$ \cite[Proposition~A.4]{bousfield:p-adic-lambda-rings} where $H^{\operatorname{even}}$ is concentrated in even degrees and $H^{\odd}$ is the exterior algebra on the odd-dimensional primitive elements of $H$. Thus odd Hopf algebras are simply classified by the underlying graded $k$-vector space of primitive elements. We therefore lose litte generality by only considering commutative Hopf algebras (i.e. not graded-commutative).
\end{remark}

\subsection*{Acknowledgements} I would like to thank Antoine Touzé for making me aware of his prior proof of Theorem~\ref{thm:modpclassification} in the finite-type case and the essential role string algebras play in it.

\section{\texorpdfstring{$p$-typical splittings and the Dieudonné equivalence}{p-typical splittings and the Dieudonné equivalence}}

Let $k$ be a perfect field of characteristic $p$. We recall the classical Dieudonné equivalence in the graded context \cite{schoeller:Hopf}. Denote by $W(k)$ the ring of $p$-typical Witt vectors of $k$ and by $\frob\colon W(k) \to W(k)$ its Frobenius, i.e. the unique lift of the $p$th power map on $k$ to a linear map on $W(k)$. Denote by $\DMod$ the category of Dieudonné modules, i.e. positively graded $W(k)$-modules together with additive operators $F\colon M_n \to M_{pn}$ and $V\colon M_{pn} \to M_n$, for each $n$, satisfying
\begin{itemize}
	\item $FV=VF=p$
	\item $F(\lambda x)=\frob(\lambda) F(x)$ for $\lambda \in W(k)$, $x \in M$;
	\item $V(\frob(\lambda) x) = \lambda V(x)$ for $\lambda \in W(k)$, $x \in M$.
\end{itemize}
By convention, $V(x)=0$ unless $p \mid |x|$.

\begin{thm}[{\cite{schoeller:Hopf}}] \label{thm:dieudonneeq}
There is an equivalence of abelian categories
\[
D\colon \Hopf \to \DMod.
\]
\end{thm}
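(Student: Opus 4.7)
The plan is to follow the standard abelian-category strategy underlying all Dieudonn\'e-type equivalences: verify that $\Hopf$ is a cocomplete abelian category with exact filtered colimits, exhibit a family of small projective generators indexed by the positive integers, identify their endomorphism algebra with the graded Dieudonn\'e ring, and invoke the Freyd--Mitchell/Gabriel representation theorem.

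First I would verify that $\Hopf$ is abelian. The kernel of a morphism $f\colon H \to H'$ is the cotensor $H \Box_{H'} k$---the largest subcoalgebra of $H$ mapping into the coaugmentation of $H'$---and the cokernel is $H' \otimes_H k$ with $H$ acting on $k$ via augmentation; both inherit Hopf structures thanks to commutativity and cocommutativity, and the first isomorphism theorem is routine. The tensor product of Hopf algebras serves as a biproduct, and directed colimits are exact because they are computed on underlying graded vector spaces.

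Next, for each $n \geq 1$ I would construct a covariant Witt--Hopf algebra $CW(n)$ whose underlying coalgebra is cofree conilpotent cocommutative on a graded vector space with basis in degrees $n, pn, p^2 n, \ldots$, and whose algebra structure is given by the graded Witt-vector multiplication. The main computation is that $\operatorname{Hom}_{\Hopf}(CW(n), -)$ is naturally $W(k)$-module-valued. The Dieudonn\'e operators $F$ and $V$ on the graded module $D(H) = \bigoplus_n \operatorname{Hom}_{\Hopf}(CW(n), H)$ arise from Hopf algebra maps between $CW(n)$ and $CW(pn)$ induced by the $p$-th power map and its algebra-level dual; the relations $FV = VF = p$ and the semilinearities $F\lambda = \frob(\lambda) F$, $\lambda V = V\frob(\lambda)$ follow from the standard Witt-vector identities together with the perfectness of $k$.

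Finally I would show that $\{CW(n)\}_{n \geq 1}$ is a set of small projective generators: smallness because each $CW(n)$ is a filtered colimit of finite-dimensional sub-Hopf algebras; projectivity from a lifting argument against Hopf-algebra epimorphisms based on the cofreeness of the underlying coalgebra; and generation by exhibiting, for any nonzero $H \in \Hopf$, a nontrivial Hopf map from some $CW(n)$ detecting a primitive of $H$. Gabriel--Mitchell then yields the claimed equivalence $D \colon \Hopf \to \DMod$. The principal obstacle is the construction of $CW(n)$ and the verification of its universal property in the graded setting: unlike the ungraded case, the generating degrees are forced into a single $p$-typical orbit $\{p^i n\}$, which is the graded reincarnation of Cartier's $p$-typical splitting of the Witt functor and is what makes the resulting category of Dieudonn\'e modules have exactly one $(F,V)$-pair per $W(k)$-graded degree.
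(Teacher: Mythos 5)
Your skeleton---show $\Hopf$ is abelian with exact filtered colimits, exhibit Witt-type small projective generators, identify their graded endomorphism ring with the Dieudonn\'e ring, and apply Gabriel--Freyd--Mitchell---is exactly the strategy of Schoeller's proof (the paper itself gives no proof; it imports the theorem from \cite{schoeller:Hopf}), and your description of kernels, cokernels and biproducts is correct. The fatal gap is in the construction of the generators. You take $CW(n)$ to have cogenerators in \emph{all} degrees $n,pn,p^2n,\dots$, i.e.\ the full ($m=\infty$) $p$-typical Witt Hopf algebra, suitably regraded. This family is neither generating nor small, and the functor $D(H)=\bigoplus_n\operatorname{Hom}_{\Hopf}(CW(n),H)$ it defines is not even faithful. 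Concretely, take $H=k[x]/(x^p)$ with $|x|=1$ and $x$ primitive; this is a nonzero object of $\Hopf$ (it is $H(1,0,\emptyset)$ in the paper's notation), yet $\operatorname{Hom}_{\Hopf}(CW(n),H)=0$ for every $n$. Indeed, for $n\geq 2$ the bottom Witt generator $x_0$ of $CW(n)$ is primitive of degree $n$, while all primitives of $H$ sit in degree $1$, so $x_0\mapsto 0$, and inductively every higher generator maps to a primitive of degree $np^i>1$, hence to $0$ (the correction terms in the Witt diagonal involve only lower generators). For $n=1$, write $x_0\mapsto\alpha x$; since $H_p=0$ the second generator must map to $0$, and compatibility with $\Delta x_1=x_1\otimes 1+1\otimes x_1-\sum_{0<j<p}\tfrac1p\binom{p}{j}x_0^j\otimes x_0^{p-j}$ forces $\alpha^p\sum_{0<j<p}\tfrac1p\binom{p}{j}x^j\otimes x^{p-j}=0$, i.e.\ $\alpha=0$. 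So your $D$ kills a nonzero object and cannot be an equivalence. Your smallness argument is also backwards: being a nontrivial filtered colimit of finite-dimensional sub-Hopf algebras is precisely an obstruction to smallness (otherwise the identity of $CW(n)$ would factor through a finite-dimensional stage), and likewise cofreeness of the underlying coalgebra does not give projectivity---the paper's final Proposition exhibits Hopf algebras cofree as coalgebras, e.g.\ $p$-torsion ones, that are not projective.

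The repair is Schoeller's actual choice of representing objects: the \emph{truncated} Witt Hopf algebras. For $n=mp^k$ with $p\nmid m$, let $H(n)=k[x_0,\dots,x_k]$ with $|x_i|=mp^i$ and the Witt-vector comultiplication, so the tower stops in degree $n$. These are finitely generated, hence small (note $\DRing$ is Noetherian); their projectivity is a genuine theorem of \cite{schoeller:Hopf}, not a formal consequence of (co)freeness; and they generate: the lowest nonzero degree of any $H\in\Hopf$ consists of primitives, and since the Witt addition polynomial $S_k$ equals $x_k+y_k$ plus terms involving only variables of index $<k$, any primitive $y\in H_n$ yields a nonzero Hopf map $H(n)\to H$ by $x_k\mapsto y$, $x_i\mapsto 0$ for $i<k$; combined with projectivity this detects all proper subobjects. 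One then sets $D_n(H)=\operatorname{Hom}_{\Hopf}(H(n),H)$, with $V$ given by restriction along the degree-preserving inclusion $H(n)\hookrightarrow H(pn)$ and $F$ by the Hopf algebra maps encoding the Witt-vector Frobenius/Verschiebung together with the Frobenius of $H$; the relations $FV=VF=p$ and the $\frob$-semilinearity are then the Witt identities you invoke, $\bigoplus_{n,m}\operatorname{Hom}(H(n),H(m))$ is the graded Dieudonn\'e ring, and your appeal to Gabriel--Mitchell goes through. The essential point your proposal misses is that the degree-$n$ part of the Dieudonn\'e module must be corepresented by a Witt tower \emph{truncated at degree} $n$; allowing the tower to continue upward destroys both generation and compactness.
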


Let us call a module $M \in \DMod$ \emph{$p$-typical of type $j$}, where $j$ is an natural number coprime to $p$, if $M_i=0$ unless $i=jp^a$ for some $a \geq 0$. Denote the full subcategory of $p$-typical Dieudonné modules of type $j$ by $\DMod^{(j)}$. Similarly, a  Hopf algebra $H$ is called \emph{$p$-typical} of type $j$ if $H_i=0$ unless $i=jp^k$ for some $k$. Denote the full subcategory of $p$-typical Hopf algebras of type $j$ by $\Hopf^{(j)}$.

\begin{lemma}[{\cite[\textsection 2]{schoeller:Hopf},\cite{ravenel:dieudonne}}]\label{lemma:splitting}
A Hopf algebra $H$ is $p$-typical of type $j$ iff $D(H)$ is a $p$-typical Dieudonné module of type $j$. There are splittings of abelian categories
\[
\DMod \simeq \prod_{(j,p)=1} \DMod^{(j)} \quad \text{ and } \quad \Hopf \simeq \prod_{(j,p)=1} \Hopf^{(j)}
\]
compatible with the Dieudonné functor $D$. Finally, for any $j$ coprime to $p$, there is an equivalence of categories
\[
\Lambda^{(j)}\colon \DMod^{(j)} \xrightarrow{\simeq} \Mod_{\DRing},
\]
where $\DRing = W(k)[s,t]/(st-p)$ is a $\Z$-graded commutative ring with $|s|=1$, $|t|=-1$, and $\Mod_{\DRing}$ denotes the category of nonnegatively graded modules over it.
\end{lemma}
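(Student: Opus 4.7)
\medskip
\noindent\textbf{Proof plan.} My plan is to prove the three assertions in sequence. The first, that $H$ is $p$-typical of type $j$ iff $D(H)$ is, follows immediately once one observes that the equivalence $D$ of Theorem~\ref{thm:dieudonneeq} is grading-preserving: the defining support condition ``vanishing outside degrees $jp^a$'' then transfers through $D$ verbatim.

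For the splitting of categories, I start on the Dieudonné side, where the splitting is manifest. The positive integers decompose as the disjoint union $\N_{>0} = \bigsqcup_{(j,p)=1}\{jp^a : a \geq 0\}$, and because $F$ multiplies degree by $p$ and $V$ divides degree by $p$, both operators preserve each component. So any $M \in \DMod$ splits canonically as $M = \bigoplus_{(j,p)=1} M^{(j)}$ with $M^{(j)} \in \DMod^{(j)}$, giving $\DMod \simeq \prod_{(j,p)=1}\DMod^{(j)}$. Since $D$ is an equivalence of abelian categories and tensor products in $\Hopf$ correspond to direct sums in $\DMod$ (both being the categorical coproduct in their respective abelian categories), transporting the splitting across $D$ yields $\Hopf \simeq \prod_{(j,p)=1}\Hopf^{(j)}$, automatically compatible with $D$.

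For the equivalence $\DMod^{(j)} \simeq \Mod_{\DRing}$, the key idea is a Frobenius twist that converts the semilinear operators $F$ and $V$ into honest $W(k)$-linear ones. Given $M \in \DMod^{(j)}$, define a graded $W(k)$-module $N$ by $N_a := M_{jp^a}$ but with the action rescaled as $\lambda \cdot_N x := \frob^a(\lambda) \cdot_M x$. Using the identities $F(\lambda x) = \frob(\lambda) F(x)$ and $V(\frob(\lambda)x) = \lambda V(x)$, a direct computation shows that $F\colon N_a \to N_{a+1}$ and $V\colon N_{a+1} \to N_a$ become $W(k)$-linear with respect to $\cdot_N$. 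Declaring $s$ and $t$ to act as $F$ and $V$ respectively, the relations $FV = VF = p$ then give $N$ the structure of a graded $\DRing$-module. The construction is reversed by the inverse twist $\lambda \cdot_M x := \frob^{-a}(\lambda) \cdot_N x$.

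The main technical point — and the only substantive content of the third equivalence — is that $\frob$ is invertible on $W(k)$, which depends on the perfectness of $k$; without this the twist would give only a fully faithful embedding rather than an equivalence. All remaining verifications (functoriality on morphisms, that the two twists are mutually inverse) are routine book-keeping once the correct re-indexing is in place.
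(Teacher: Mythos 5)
Your proposal is correct and follows essentially the same route as the paper: the splitting is established on the Dieudonné module side, where $F$ and $V$ visibly preserve the type $j$, and then transported through the equivalence $D$; the third equivalence is the same Frobenius-twist functor $\Lambda^{(j)}$ (the paper's rescaled action $\lambda.x = \lambda^{p^n}x$ is exactly your $\frob^a$-twist), with perfectness of $k$ invoked precisely to make $\frob$ invertible and hence the functor an equivalence. Your write-up is simply a more detailed rendering of the paper's terse argument.
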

\begin{proof}
The preservation of $p$-typicality under the Dieudonné functor follows from its construction.
The $p$-typical splitting on the module level is obvious because $F$ and $V$ preserve the type $j$. The corresponding splitting of Hopf algebras follows.

For the last statement, construct a functor $\Lambda^{(j)}\colon \DMod^{(j)} \to \Mod_{\DRing}$ by
\[
\Lambda^{(j)}(M)_n = M_{jp^n} \quad \text{with $W(k)$-module structure given by } \lambda.x = \lambda^{p^n} x
\]
and define $s\colon \Lambda^{(j)}(M)_n \to \Lambda^{(j)}(M)_{n+1}$ by $s(x)=F(x)$ and similarly $t(x)=V(x)$. Since $k$ is perfect, $\frob$ is bijective on $W(k)$, and $\Lambda^{(j)}$ is an equivalence of categories.
\end{proof}

Using this, the classification of Hopf algebras is thus reduced to the classification of modules in $\Mod_{\DRing}$. We let
\begin{equation}\label{eq:ptypicaldieudonne}
D^{(j)} \colon \Hopf^{(j)} \xrightarrow{D} \DMod^{(j)} \xrightarrow{\Lambda^{(j)}} \Mod_{\DRing}
\end{equation}
denote the composite equivalence between $p$-typical Hopf algebras of type $j$ and modules over $\DRing$.

Similarly, the classification of $p$-torsion Hopf algebras is reduced to the category of modules in $\Mod_R$, where $R=\mathcal R/(p) = k[s,t]/(st)$.

\section{\texorpdfstring{The classification of graded $k[s,t]/(st)$-modules}{The classification of graded k[s,t]/(st)-modules}} \label{sec:kstmodules}

In this section, we will consider the category of $\Z$-graded $R$-modules and restrict to nonnegatively graded $R$-modules only when necessary.

\subsection{\texorpdfstring{Duality of $\mathcal R$-modules}{Duality of ℛ-modules}}

Given an $\mathcal R$-module $M$, we denote by $M^\op$ the (covariant) dual given by 
\[
(M^\op)_m = M_{-m}
\]
This becomes an $\mathcal R$-module by interchanging the $s$- and $t$-action, and descends to a duality of $R$-modules (recall that $R=\mathcal R/(p)$).

\begin{remark}
Note that this duality is not meaningful on the level of Hopf algebras since it does not send nonnegatively graded modules to nonnegatively graded modules. The duality that corresponds to Cartier duality (linear duals of Hopf algebras) is Pontryagin (or Matlis) duality $M^*=\Hom_{W(k)}(M,CW(k))$, which does not satisfy $M^{**}=M$ unless $M$ is of finite length. We will use the simple-minded covariant dual to save work in the sections that follow.
\end{remark}

\subsection{\texorpdfstring{Chains in $R$-modules}{Chains in R-modules}}

The fundamental concept in our study of $R$-modules is that of a chain, which is a sequence of elements connected by either $s$- or $t$-multiplication.

\begin{defn}
A \emph{chain} $C$ in $M \in \Mod_R$ is a sequence of elements $a_i \in M_i$ in consecutive degrees $i$ such that either $t(a_i)=a_{i-1}$ or $s(a_{i})=a_{i+1}$. We allow finite chains ($i \in \{m,\dots,n\}$) as well as infinite chains in either or both directions.

We write $\Ch_{m,n}$ for the set of chains from degree $m$ to degree $n$, allowing $n$ to be $\infty$ and $m$ to be $-\infty$.

We write $[m,n]$ for the range $\{m,\dots,n\}$ when $-\infty<m\leq n<\infty$, $[m,\infty]=\{m,m+1,\dots\}$, $[-\infty,n] = \{\dots,n-1,n\}$ and $[-\infty,\infty]=\Z$.

A \emph{signature} is a tuple $\sigma=(m,n,I)$ where $-\infty\leq m \leq n\leq \infty$ and $I \subseteq [m,n-1]$.

We say that a chain $C=(a_i)_{i \in [m',n']} \in \Ch_{m',n'}$, for $m' \leq m$ and $n' \geq n$, \emph{adheres} to the signature $\sigma=(m,n,I)$ if:
\begin{enumerate}
	\item $a_i = 0$ unless $m \leq i \leq n$;
	\item $s(a_i)=a_{i+1}$ if $i\in I$;
	\item $t(a_{i+1})=a_i$ if $i \not\in I$.
\end{enumerate}
We write $\Ch_\sigma$ for the $k$-vector space of chains adhering to the signature $\sigma$.
\end{defn}

Note that $\Ch_{m,n}$ is not an abelian group since sums of chains of distinct signatures usually are not chains. A chain has a unique signature if and only if all of its elements are nonzero.

\begin{defn}
We say that a chain $C$ is \emph{strongly degenerate} if it contains a zero element, and \emph{degenerate} if it is a $k$-linear combination of strongly degenerate chains (of the same size, and adhering to a common signature).
\end{defn}

\begin{example}\label{ex:illustrativechainexample}
Consider the following module $M$:
\[
\tikz[>=stealth]\graph[grow up, branch right,  nodes={math nodes,inner sep=1.5pt},empty nodes] { a <- { b -> d; c }};
\]
In this and similar pictures, the grading is given by the height (the degree of $a$ is $0$, the degree of $b$ and $c$ is $1$, and the degree of $d$ is $2$). All letters are generators of a copy of $k$, downward arrows denote a nontrivial $t$ (thus $t(b)=t(c)=a$), and upward arrows denote a nontrivial $s$ (thus $s(b)=d$).

In $M$, the chain $(a,b,d)$ is degenerate (but not strongly degenerate), as it is the sum of the degenerate chains $(0,b-c,d)$ and $(a,c,0)$. The chains $(a,b)$, $(a,c)$, and $(b,d)$ are all nondegenerate. 
\end{example}

\begin{defn}
We say that a chain $(a_m,\dots,a_n)$ is \emph{lower closed} if $m=-\infty$ or $t(a_m)=0$, \emph{upper closed} if $n=\infty$ or $s(a_n)=0$, and \emph{closed} if it is both.
\end{defn}

Note that every chain $C$ has a unique upper closure $\overline C$, lower closure $\underline C$, and closure $\overline{\underline C}$, given as the minimal (upper/lower) closed chain containing $C$. Example~\ref{ex:illustrativechainexample} shows that the closure of a nondegenerate chain may well become degenerate.

We introduce some more notation:
\begin{defn}$\;$
\begin{itemize}
	\item $\Ch^+_m = \coprod_{m \leq n \leq \infty} \Ch_{m,n}$ is the set of \emph{upward chains};
	\item $\Ch^-_n = \coprod_{-\infty \leq m \leq n} \Ch_{m,n}$ is the set of \emph{downward chains}. 
	\item $\Ch^+(a)$ and $\Ch^-(a)$ are the sets of upward and downward chains, respectively, with extremal element $a \in M$.
	\item $\Ch^\pm_\sigma(a)$ is the set $\Ch^\pm(a) \cap \Ch_\sigma$, the set of upward/downward chains adhering to the signature $\sigma$ with extremal element $a$.
	\item For a chain $C\in \Ch_{m,n}$ and $m \leq m'\leq n'\leq n$, we denote by $C|_{m'}^{n'} \in \Ch_{m',n'}$ the subchain in degrees $m'$ through $n'$; we use the same notation for signatures, $\sigma|_{m'}^{n'}$.
\end{itemize}
\end{defn}

Finally, we consider modules generated by chains:
\begin{defn}
A submodule $M<N$ is called a \emph{chain module} if it is freely generated by the elements of a chain $C$ that is not strongly degenerate. Note that this implies that $C$ is closed.
For a signature $\sigma=(m,n,I)$, the \emph{chain module associated to $\sigma$} is the unique module $M(\sigma)$ such that $M(\sigma)_i =k\langle x_i\rangle$ for $m \leq i \leq n$ and $M(\sigma)_i=0$ otherwise, and such that $(x_m,\dots,x_n)$ is a chain of signature $\sigma$.
\end{defn}

\subsubsection{Degenerate chains}

Under $R$-linear maps, degenerate chains map to degenerate chains and thus preimages of nondegenerate chains are nondegenerate. Subchains of nondegenerate chains are nondegenerate, but concatenations of nondegenerate chains can become degenerate (cf. Ex.~\ref{ex:illustrativechainexample}).

A finite degenerate chain can always be written as a sum of two degenerate 
chains, one of which has a zero as its topmost element and the other one has a zero as its bottom element. Again, Ex.~\ref{ex:illustrativechainexample} illustrates this.

\begin{lemma}\label{lemma:degenerateupperlowerdecomp}
A finite chain $C \in \Ch_\sigma$ is degenerate iff $C=C^u + C^l$ for some $C^u \in \Ch^+_\sigma(0)$ and some $C^l \in \Ch^-_\sigma(0)$.
\end{lemma}
\begin{proof}
It is clear that $C$ is degenerate if it can be written as such a sum, so let us consider the converse. We use induction on $n-m$. The cases $n-m=0$ and $n-m=1$ are clear. Let $C=C_1+\cdots+C_k$ be a decomposition into strongly degenerate chains. Without loss of generality, suppose that $C_1,\dots,C_l$ have vanishing bottom element and $C_{l+1},\cdots,C_k$ have nonvanishing bottom element.

The subchains $C_i'=C_i|_{m+1}^n$ for $i=l+1,\dots,k$ exhibit the corresponding subchain $C'=(C_{l+1}+\dots+C_k)|_{m+1}^n$ as degenerate. Thus $C' = (C')^u+(C')^l$ by inductive assumption. By setting $C^u$ to be the sum of the downward extension of $(C')^u$ with $0$ and $C_1+\dots+C_l$ and $C^l$ the downward extension of $(C')^l$ with the bottom element of $C$, we obtain the desired decomposition.
\end{proof}

\begin{lemma}\label{lemma:degenerateinfinitechainchar}
An (infinite) chain $C$ is nondegenerate if and only if all finite subchains are nondegenerate.
\end{lemma}
\begin{proof}
Suppose $C$ is degenerate, with $C=C_1 + \cdots + C_k$ a decomposition into strongly degenerate summands. Suppose $C_i = (a_{ij})$ has $a_{i,j_i}=0$. Then the finite subchain of $C$ contained within degrees $\min(j_i)$ and $\max(j_i)$ is degenerate.

For the converse direction, suppose that $C$ is a chain and the finite subchain $C_m^n$ from degree $m$ to degree $n$ is degenerate. Let $C_m^n = C^u+C^l$ be a decomposition as in Lemma~\ref{lemma:degenerateupperlowerdecomp}. Let $C_1 = C^u \cup C|_{n+1}^\infty$ and $C_2 = C^l \cup C|_{-\infty}^{m-1}$ are degenerate and $C=C_1+C_2$, so $C$ is degenerate.
\end{proof}

\subsubsection{A total order on signatures}

\begin{defn}
We denote the set of signatures with fixed $m$ and $n$ by $\Sig_{m,n}$, the sets of signatures with bottom degree $m$ by $\Sig^+(m)$, and the sets of signatures with top degree $n$ by $\Sig^-(n)$, in the same way as for chains.

We define a total order on $\Sig^+(m)$ as a variant of the lexicographic order. Let $\sigma = (m,n,I)$ and $\sigma' = (m,n',I')$ be distinct signatures. Suppose that $k \geq m$ is the maximal number such that
\[
k \leq n, \; k \leq n',\; \text{and } I \cap [m,k] = I' \cap [m,k].
\]
Then $\sigma' < \sigma$ iff the following hold:
\begin{itemize}
	\item $k \not\in I$; and
	\item $k=n'$ or $k \in I'$ .
\end{itemize}
Otherwise, $\sigma' > \sigma$.

Similarly, we define a total order on $\Sig^-(n)$ by duality: if $\sigma=(m,n,I)$ is a signature then the dual signature is defined as
\[
\sigma^\op = (-n,-m,\{-i \mid i \not\in I,m\leq i \leq n\})
\]
If $\sigma \in \Sig^-(n)$ then $\sigma^\op \in \Sig^+(-n)$, and we define $\sigma<\sigma'$ iff $\sigma^\op > (\sigma')^\op$. 

Explicitly: $(m',n,I')<(m,n,I)$ iff there exists a $k$ such that $I \cap \{k,\dots,n\} = I' \cap \{k,\dots,n\}$ and 
\begin{itemize}
	\item $k-1 \not\in I$; and
	\item $k = m'$ or $k-1 \in I'$.
\end{itemize}
\end{defn}

The duality of these orders is designed to be compatible with the covariant duality of modules: $C$ is a chain in $M$ adhering to $\sigma$ if and only if the dual chain $C^\op$ in $M^\op$ adheres to $\sigma^\op$.

We can depict a signature $\sigma \in \Sig^+(m)$ graphically by a chain of $s$ and $t$, depending on whether $i \in S$ or not. In particular, for any signatures $x$, $y$ and $z$, the ascending order satisfies
\[
(x s y) < (x) < (x t z),
\]
and the descending order satisfies
\[
(y s x) < (x) < (z t x).
\]

Note that the ascending order on $\Sig^+(m)$ is neither a well-ordering nor a co-well-ordering:
\[
()<(t)<(tt)<\cdots \quad \text{and} \quad ()>(s)>(ss)) >\cdots.
\]
The same is true for the descending order unless one restricts to signatures of nonnegatively graded modules, in which case $\Sig^-(n)$ is finite.

The order on signatures is compatible with whether an element is the bottom (or top) element of a chain of that signature:
\begin{lemma}\label{lemma:filtering}
Let $a \in M_m$, $m \leq n \leq \infty$, and $\sigma>\tau \in \Sig(m,n)$ be two signatures such that $\Ch_\sigma^+(a)$ is nonempty. Then $\Ch_\tau^+(a)$ is also nonempty. 

Similarly, for $b \in M_n$,  if $\Ch_{\tau}^-(b)$ is nonempty then so is $\Ch_{\sigma}^-(b)$.
\end{lemma}
\begin{proof}
Let $\sigma=(m,n,I)$ and $\tau = (m,n',J)$. Then there exists $k \geq m$ such that $\sigma|_m^k = \tau|_m^k$ and $k \in J-I$. Thus $C = (a_i)_{m \leq i \leq n} \in \Ch_\sigma^+(a)$ then $t(a_{k+1})=a_k$ and thus $s(a_k)=0$. Hence the chain $(a_m,\dots,a_k,0,\dots,0)$ adheres to $\tau$.

The statement about downward chains follows from duality.
\end{proof}

\begin{defn}
We say that a chain $C\in \Ch_{m,n}$ is \emph{normalized} if there exists an $m' \in [m,n]$ such that $C|_m^{m'}$ is nondegenerate and $C|_{m'+1}^n = 0$.
\end{defn}

In particular, nondegenerate chains are automatically normalized.

\begin{lemma}\label{lemma:normalizedchains}
Let $a \in M_m-\{0\}$ and $C \in \Ch^+_\sigma(a)$. Then there is a normalized chain $C' \in \Ch^+_\sigma(a)$ and the signature of the nondegenerate part of $C'$ is greater or equal to $\sigma$, and equal if and only if $C$ is nondegenerate. If $C$ is degenerate, then $C'$ can be chosen upper closed.
\end{lemma}

\begin{proof}
We use induction in the length of the chain $C$. If $C$ has length $1$ (the minimal possible length since $a \neq 0$) then $C$ is nondegenerate and hence normalized.

Let $\sigma=(m,n,I)$. If $C$ is degenerate, let $C=C^u+C^l$ as in Lemma~\ref{lemma:degenerateupperlowerdecomp}. Then $C^l \in \Ch^+_{\sigma}(a)$. Let $C'=C^l|_m^{j}$ be the longest subchain that is not strongly degenerate. It has signature $\sigma'=\sigma|_m^j$. We necessarily must have $j \in I$. Thus $\sigma'>\sigma$. Even if $C'$ is not yet normalized, there is a normalized chain $C'' \in \Ch_{\sigma'}^+(a)$ of signature at least $\sigma'$, which can be extended with zeroes to a normalized chain $C' \in \Ch^+_{\sigma'}(a)$.
\end{proof}

The following proposition says that a chain is nondegenerate iff its signature is extremal:

\begin{prop}\label{prop:degeneratechar}
Let $\sigma=(m,n,I)$ be a signature, $a \in M_m$, $b \in M_n$, and $C \in \Ch_\sigma$ a chain between $a$ and $b$. Then the following are equivalent:
\begin{enumerate}
	\item $C$ is degenerate; \label{prop:degeneratechar:deg}
	\item There is an upper closed chain $C' \in \Ch_{m,n} \cap \Ch^+(a)$ adhering to a bigger signature than $\sigma$; \label{prop:degeneratechar:closedupchain}
	\item There is a chain $C' \in \Ch_{m,n} \cap \Ch^+(a)$ adhering to a bigger signature than $\sigma$; \label{prop:degeneratechar:upchain}
	\item There is a lower closed chain $C' \in \Ch_{m,n} \cap \Ch^-(b)$ adhering to a smaller signature than $\sigma$; \label{prop:degeneratechar:closeddownchain}
	\item There is a chain $C' \in \Ch_{m,n} \cap \Ch^-(b)$ adhering to a smaller signature than $\sigma$; \label{prop:degeneratechar:downchain}
\end{enumerate}
\end{prop}
\begin{proof}
A chain $C$ in $M$ is degenerate iff its dual chain $C^\op$ in $M^\op$ is, so it suffices to prove $\eqref{prop:degeneratechar:deg} \iff \eqref{prop:degeneratechar:closedupchain} \iff \eqref{prop:degeneratechar:upchain}$.

The implication $\eqref{prop:degeneratechar:deg}\Rightarrow\eqref{prop:degeneratechar:closedupchain}$ follows immediately from Lemma~\ref{lemma:normalizedchains}. Clearly, $\eqref{prop:degeneratechar:closedupchain}\Rightarrow\eqref{prop:degeneratechar:upchain}$.

For $\eqref{prop:degeneratechar:upchain}\Rightarrow\eqref{prop:degeneratechar:deg}$, let $C'\in \Ch^+_{\sigma'}(a) \cap \Ch_{m,n}$, $\sigma'=(m,n',I')>\sigma=(m,n,I)$ and $n'\leq n$. Denote the elements of $C$ by $a=a_m,a_{m+1},\dots,a_n=b$ and the elements of $C'$ by $a=a_m',a_{m+1}',\dots,a_{n}' = b'$.

According to the definition of the order, there exists a $k$ with $I \cap [m,k]=I'\cap [m,k]$ such that $k \in I-I'$. (The case $k=n$ is excluded because by assumption, $k \leq n'\leq n$ and thus in that case, $\sigma=\sigma'$.)

This means that $s(a_k) = a_{k+1}$, and either $k=n'<n$ or $k<n'$ and $t(a'_{k+1})=a'_k$.

In the former case, $C'$ must be strongly degenerate and also adheres to $\sigma$. However, $C-C'$ has bottom element $0$ and thus is also strongly degenerate. Thus $C=C'+(C-C')$ is degenerate.

In the latter case (i.e., $n'>k$ and $t(a'_{k+1})=a'_k$), we must have that $s(a'_k)=0$ and hence there is a degenerate chain $C''=(a'_m,\dots,a'_k,0,\dots,0)$ adhering to the signature $\sigma$. Moreover, $C-C''$ is a degenerate chain as well, thus $C$ is degenerate.
\end{proof}

\subsubsection{Height and order}

It follows from Prop.~\ref{prop:degeneratechar} that for every nonzero $a \in M_m$ and $n \geq m$, there exists at most one signature $\sigma(a,n)$ such that $\Ch^+_{\sigma(a,n)}(a)$ contains a nondegenerate chain. Moreover, since subchains of nondegenerate chains are nondegenerate, $\sigma(a,n+1)|_m^n = \sigma(a,n)$, which means that the sequence $\{\sigma(a,n)\}$ of all elements $\sigma(a,n)$ that exist, converges in the order topology to a unique signature.

\begin{defn}
The \emph{height} $\height(a)$ of a nonzero element $A \in M$ is the signature
\[
\lim_{n \to \infty} \sigma(a,n),
\]
where the limit is taken over the (finite or infinite) set of $n$ such that $\sigma(a,n)$ exists. 

Similarly, the \emph{order} $\order(a)$ of a nonzero element $A \in M$ is defined as dual of the height of its dual:
\[
\order(a) = (\height(a^\op))^\op
\]
where $a^\op \in M^\op$.

An element $a$ of height $\sigma$ does not necessarily admit a nondegenerate chain in $\Ch^+_\sigma(a)$; if it does, we call it a \emph{limit element}. (Of course, the issue only arises for elements of infinite height.) Similarly, if $a$ admits a nondegenerate chain in $\Ch^-_{\order(a)}(a)$, we call it a \emph{colimit element}. 

An $R$-module in which all elements are limit and colimit elements is called \emph{complete}.
\end{defn}

The difference between elements of infinite heights and limit elements is analogous to the question of the vanishing of the derived inverse limit $\lim^1(\cdots \xrightarrow{p} A \xrightarrow{p} A)$ of the multiplication-by-$p$ map on an abelian $p$-group $A$. An element $a\in A$ can be $p^n$-divisible for all $n$ without admitting a sequence $(a_i)$ with $a_0=a$ and $pa_{i+1}=a_i$. The following example illustrates this idea for $R$-modules:

\begin{example}\label{ex:nonpureinjective}
For any two $R$-modules $M$, $N$ with $M_0= k\langle x\rangle$, $N_0= k\langle y\rangle$, $sx=0=sy$, we define the bouquet
\[
M \vee N = (M \oplus N)/(x-y).
\]
Let $\sigma=(0,\infty,I)$ be any signature with $0 \not \in I$ and such that $I$ has infinite complement. Note that $M(\sigma|_0^n)$ is a submodule of $M(\sigma)$ iff $n \not\in I$. Define an $R$-module as the infinite bouquet
\[
NPI(\sigma) = \bigvee_{n \not\in I} M(\sigma|_0^n).
\]

The module $NPI(0,\infty,\emptyset)$ is depicted below:
\[
\begin{tikzpicture}[>=stealth]
\graph[grow up, branch right,  nodes={math nodes,inner sep=1.5pt},empty nodes] { x_0 <- { x_{11}; x_{21} <- x_{22}; x_{31} <- x_{32} <- x_{33} }}; 
\node at (3,1) {$\cdots$};
\end{tikzpicture}
\]
We also define the module $PI(\sigma)$ as
\[
PI(\sigma) = NPI(\sigma) \vee M(\sigma).
\]
Denote the standard basis elements in the $M(\sigma)$ summand by $x_{\infty,i}$.

In both $NPI(\sigma)$ and $PI(\sigma)$, $\height(x_0)=\sigma$, but only in $PI(\sigma)$ is $x_0$ a limit element. By replacing the basis elements $x_{n,i}$ by $x_{n,i}-x_{\infty,i}$ in $PI(\sigma)$, we see that
\[
PI(\sigma) \cong M(\sigma) \oplus \bigoplus_{n \not\in I, n \geq 1} M(\sigma|_1^n)
\]
is isomorphic to a direct sum of chain modules.

It will follow from Thm.~\ref{thm:pureinjectivity} that $NPI(\sigma)$ cannot be a direct sum of chain modules. We will continue the study of this example in Ex.~\ref{ex:nonpureinj:cont}.
\end{example}

\begin{lemma} \label{lemma:extendingchainsonlimitelements}
The following are equivalent for an element $a \in M_m$:
\begin{enumerate}
	\item $a$ is a limit element.
	\item For every $\sigma \in \Sig_{m,\infty}$ such that $\Ch_{\sigma|_m^n}(a)$ is nonempty for all $n$, $\Ch_\sigma(a)$ is nonempty.
\end{enumerate}
\end{lemma}
\begin{proof}
The direction $(2)\Rightarrow(1)$ is immediate by choosing $\sigma=\height(a)$. For the converse, let $C_n \in \Ch_{\sigma|_m^n}(a)$. If $C_n$ is degenerate for some $n \geq m$ then by Lemma~\ref{lemma:degenerateupperlowerdecomp}, there exists another chain $C_n^l \in \Ch^+|_{\sigma|_m^n}(a)\cap \Ch^-(0)$. This chain can be extended upwards by zeroes to produce a chain in $\Ch_\sigma^+(a)$.

If, on the other hand, all $C_n$ are nondegenerate, then the existence of a (nondegenerate) element in $\Ch_\sigma(a)$ follows from the definition of limit elements.
\end{proof}

\begin{lemma}\label{lemma:heightorderprops}$\;$
\begin{enumerate}
	\item If $\height(a)$ is finite then there exists an upper closed nondegenerate chain of signature $\height(a)$ in $\Ch^+(a)$. \label{lemma:heightorderprops:upperclosedexample}
	\item If $\height(a)=(m,n,I)$, $n \leq \infty$, then for every $m\leq n' \leq n$ with $n' \not \in I$, there exists an upper closed nondegenerate chain of signature $\height(a)|_m^{n'}$ in $\Ch^+(a)$.\label{lemma:heightorderprops:infiniteheightclosedchains}
	\item If $\order(a)$ is finite then there exists a lower closed nondegenerate chain of signature $\order(a)$ in $\Ch^-(a)$. \label{lemma:heightorderprops:lowerclosedexample}
	\item If $C \in \Ch^+(a)$ is nondegenerate and upper closed then $\sigma(C) \leq \height(a)$. \label{lemma:heightorderprops:nondegclosedlowerbound}
	\item If $C \in \Ch^-(a)$ is nondegenerate and lower closed then $\sigma(C) \geq \order(a)$. \label{lemma:heightorderprops:nondegclosedupperbound}
\end{enumerate}
\end{lemma}
\begin{proof}
	\eqref{lemma:heightorderprops:nondegclosedlowerbound}: Suppose that $\sigma=\sigma(C)=(m,n,I)$ and $\height(a)=(m,n',I')>\sigma$. Then we must have $n' > n$, $\sigma = \height(a)|_m^n$, and $n \in I'$. Let $C'$ be a nondegenerate chain of signature $\sigma' = \height(a)_m^{n+1}$. Since $C$ is upper closed, $C \in \Ch(\sigma')$ and is degenerate as such because its top element is zero. Similarly, $C'-C$ is degenerate because its bottom element is zero. Thus the sum, $C'$, is degenerate, a contradiction.

	\eqref{lemma:heightorderprops:upperclosedexample}: Let $C \in \Ch_\sigma(a)$ be a nondegenerate chain of signature $\sigma=(m,n,I)=\height(a)$ and suppose that $C$ is not upper closed. Let $C'$ be the unique extension of $C$ to $\Ch_{m,n+1}$. This chain $C'$ must be degenerate, because otherwise $\height(a)$ would not have maximal length. $C'$ adheres (uniquely) to the signature $\sigma'=(m,n+1,I \cup \{n\})<\sigma$. 	
	By Lemma~\ref{lemma:normalizedchains}, there exists a normalized, upper closed chain $C''\in \Ch_{\sigma'}(a)$. Let $D \in \Sig_{m,\leq n}$ denote its nondegenerate part, which is also upper closed. By Lemma~\ref{lemma:normalizedchains} again, $\sigma(D) > \sigma'$. Since in $\Sig_{m,\leq n+1}$, $\sigma$ is a direct successor to $\sigma'$, $\sigma(D) \geq \sigma$. But by \eqref{lemma:heightorderprops:nondegclosedlowerbound}, we also have $\sigma(D) \leq \sigma$, thus $D$ is a nondegenerate, upper closed chain in $\Ch_{\height(a)}(a)$.

	For \eqref{lemma:heightorderprops:infiniteheightclosedchains}, let $I'=[m,\infty]-I$. For every $n \in I'$, let $C_n \in \Ch_{\sigma|_m^{n+1}}(a)$ be a nondegenerate chain. By assumption, $C_n|m^n$ is upper closed and, as a subchain of a nondegenerate chain, nondegenerate.

	Assertions \eqref{lemma:heightorderprops:nondegclosedupperbound} and \eqref{lemma:heightorderprops:lowerclosedexample} are dual to \eqref{lemma:heightorderprops:nondegclosedlowerbound} and \eqref{lemma:heightorderprops:upperclosedexample}, respectively.
\end{proof}

\begin{lemma}\label{lemma:homomorphicimageofnondegenerateupperclosedchain}
Let $f\colon N \to M$ be a morphism of $R$-modules and $a \in N$. If $C \in \Ch_\sigma^+(a)$ is a nondegenerate, upper closed chain then $\sigma \leq \height_M(f(a))$.
\end{lemma}
\begin{proof}
If $f(C)$ is still nondegenerate then the claim follows from Lemma~\ref{lemma:heightorderprops}\eqref{lemma:heightorderprops:nondegclosedlowerbound}. 

If, on the other hand, $f(C)$ is degenerate in $M$ then by Prop.~\ref{prop:degeneratechar} there is a chain $D$ in $\Ch^+(f(a))$ adhering to a bigger signature $\tau$ , where $\tau$ and $\sigma$ have the same length. Hence, by Lemma~\ref{lemma:normalizedchains}, there also exists a normalized chain $C'\in \Ch_\tau^+(f(a))$ with upper closed, nondegenerate part of signature $\tau'>\tau>\sigma$. Since $\tau' \leq \height(f(a))$ by Lemma~\ref{lemma:heightorderprops}, the claim follows.
\end{proof}

\begin{lemma}\label{lemma:height-basicprops}$\;$
\begin{enumerate}
	\item For $f\colon N \to M$ and $a \in N$, $\height_N(a) \leq \height_M(f(a))$ and $\order_N(a) \geq \order_M(f(a))$.
	\item Let $a \in N_m$, $b \in M_m$. Then
	\[
	\height_{N \oplus M}(a,b) = \min(\height_N(a),\height_M(b))
	\] 
	and
	\[
	\order_{N \oplus M}(a,b) = \max(\order_N(a),\order_M(b))
	\]
	\item If $a,b \in M_m$ with $a$, $b$, and $a+b$ nonzero, then
\[
\height(a+b) \geq \min(\height(a),\height(b)).
\]
and
\[
\order(a+b) \leq \max(\order(a),\order(b))
\]
\end{enumerate}
\end{lemma}
\begin{proof}$\;$
\begin{enumerate}
	\item
	If $\height(a)$ is finite then the claim follows from Lemma~\ref{lemma:homomorphicimageofnondegenerateupperclosedchain} and Lemma~\ref{lemma:heightorderprops}\eqref{lemma:heightorderprops:upperclosedexample}.
	If $\height(a)=\sigma=(m,\infty,I)$ is infinite then we consider two cases. If $[m,\infty]-I$ is finite, with maximal element $n$, then there exists a chain $C \in \Ch_\sigma^+(a)$ and Lemma~\ref{lemma:homomorphicimageofnondegenerateupperclosedchain} yields the result.

	In the remaining case, there exists a sequence of upper closed, finite, nondegenerate chains $C_i$ in $\Ch_{\sigma_i}^+(a)$ with $\lim_{i \to \infty} \sigma_i = \height(a)$. Again by Lemma~\ref{lemma:homomorphicimageofnondegenerateupperclosedchain}, $\sigma_i \leq \height(f(a))$ for all $i$ and thus $\height(a)\leq\height(f(a))$.

	The claim about orders is dual.
	
	\item The projection $N \oplus M \to N$ shows that $\height_{N \oplus M}((a,b)) \leq \height_N(a)$, and similarly $\leq \height_M(b)$, so $\height_{N \oplus M}(a,b) \leq \min(\height_N(a),\height_M(b))$. To show equality, suppose that $\sigma=\height_N(a)\leq \height_M(b)=\tau$. If $a$ is a limit element, there exists a nondegenerate, upper closed chain $C \in \Ch_\sigma^+(a)$. By Lemma~\ref{lemma:filtering}, there also exists a (degenerate unless $\sigma=\tau$) upper closed chain $D \in \Ch_\sigma^+(b)$. Then the chain $(C,D)\in \Ch_{\sigma}^+((a,b))$ is upper closed and nondegenerate since its projection to $N$ is nondegenerate. By Lemma~\ref{lemma:heightorderprops}, $\height_{M\oplus N}((a,b)) \geq \sigma$. In case $a$ is not a limit element, choose a sequence $C_i$ of upper closed, nondegenerate chains with signatures converging to $\sigma$ and proceed as before.

	The claim about orders is dual.

	\item This follows from the previous part, using the addition homomorphism $M \oplus M \to M$.\qedhere
\end{enumerate}
\end{proof}

\subsection{\texorpdfstring{Injective $R$-modules}{Injective R-module}}

From now on, we will concentrate on nonnegatively graded $R$-modules. The injective objects are very easily classified:

\begin{lemma}
A connected $R$-module $M$ is injective if and only if the following three conditions are met:
\begin{enumerate}\renewcommand{\theenumi}{\roman{enumi}}
	\item $\ker s = \im t$ in all degrees;
	\item $\ker t = \im s$ in all positive degrees; and
	\item If $x \in \im(s)_{n+1}$ and $y \in \im(t)_{n-1}$ then there exists $z \in M_n$ such that $x=s(z)$ and $y = t(z)$. \label{lifting:typeiii}
\end{enumerate}
\end{lemma}
\begin{proof}
The three conditions boil down to the lifting problems for the monomorphisms which are the inclusions of the three ideals $(s)$, $(t)$, and $(s,t)$ into $R$, respectively. More precisely, since we are dealing with nonnegatively graded modules, they correspond to the inclusions
\[
(\Sigma^n I)_{\geq 0} \hookrightarrow (\Sigma^n R)_{\geq 0}
\]
for the three ideals $I=(s)$, $(t)$, and $(s,t)$, respectively, and any $n \geq 0$. This shows that injective $R$-modules satisfy the three conditions. Conversely, since the mentioned three ideals are precisely the graded prime ideals of $R$, the refined Baer criterion shows that the three conditions are sufficient for the injectivity of $M$.
\end{proof}

\begin{corollary}
A chain module $M(\sigma)$ is injective if and only if $\sigma=(0,\infty,[0,n])$ for some $n \leq \infty$.
\end{corollary}
\begin{proof}
If $M$ was finite with top degree $n$ then $s(M_n)=0$ and hence $M_n \subseteq \im(t)$, a contradiction. So $M$ has to be infinite.

If $M$ had bottom degree $n>0$ then $t(M_n)=0$ and hence $M_n \subseteq \im(s)$, again a contradiction. So $M_0 \cong k$.

If there was some $n \in I$ with $n-1 \not\in I$ then we have a class $x_n \in M_n$, unique up scaling, with $s(x_n) = x_{n+1} \neq 0$ and $t(x_n) = x_{n-1} \neq 0$. This means that the lifting problem of type  \eqref{lifting:typeiii} with $x=x_{n+1}$ and $y=0$ does not have a solution.
\end{proof}

\begin{corollary}
Any injective $R$-module is a direct sum of injective chain modules.
\end{corollary}
\begin{proof}
First note that a nontrivial injective module $M$ must have nontrivial $M_0$. Thus let $M$ be injective and $a \in M_0 - \{0\}$. By injectivity, either $s^n(a) \neq 0$ for all $n$ or there is an $n \geq 0$ such that $s^n(a) \neq 0$ and there are classes $a_{n+1}, a_{n+2},\dots$ such that $t(a_{i+1})=a_i$ and $a_{n+1}=s^n(a)$. Thus $M$ contains an injective chain module, which by injectivity must be a direct summand. The claim follows by transfinite induction since arbitrary direct sums of of injectives over $R$ are injective.
\end{proof}

\subsection{Purity} \label{sec:purity}

For any ring $R$, a submodule $P$ of an $R$-module $M$ is called \emph{pure} if its inclusion map stays injective after tensoring with any other $R$-module. Direct summands are examples of pure submodules, and in fact every pure submodule is a filtered colimit of direct summands \cite[2.30]{adamek-rosicky:presentable-accessible}. Thus the notion of purity is really only interesting when studying infinitely generated modules, and in fact turns out to be a central notion in that case. An equivalent characterization of purity is: $P<M$ is pure iff each linear equation $Ax=\beta$ with $\beta \in P^m$, $A \in R^{m \times n}$, that is solvable in $M$, is also solvable in $P$ (cf. \cite[Theorem~4.89]{lam:modules-and-rings}).  

A module $I$ is called \emph{pure-injective} if $\Hom(M,I) \to \Hom(P,I)$ is surjective for every pure morphism $P \to M$, i.e. if maps into $I$ extend along pure monomorphisms. The module $I$ is \emph{algebraically compact} if every system of (infinitely many) $R$-linear equations
\begin{equation}\label{eq:biglinearsystem}
\Bigl\{\sum_{j \in J} r_{ij} x_j = b_i\Bigr\}_{i \in I}
\end{equation}
in possibly infinitely many variables $x_j$ has a solution under the assumption that every finite subset of equations does. Of course, every single equation in \eqref{eq:biglinearsystem} has only finitely many terms, i.~e. for each $i$, $r_{ij}=0$ for all but finitely many $j$, but the required solution $x_j$ does not need to satisfy that all but finitely many $x_j$ are zero. Again, it is a standard result \cite{warfield:purity} that pure-injective modules are algebraically compact and vice versa.

In this subsection, we will show that linear systems of equations, as used in characterizing pure morphism and pure-injective modules, can be replaced by the simpler notion of chains in our setting. We continue to only consider connected $R$-modules.

\begin{remark}
Note that the set of chains $C \in \Ch_\sigma^+(a)$ of signature $\sigma=(m,n,I)$ (where $m \leq n \leq \infty$) is equal to the solution set of the system of linear equations in the variables $(a_i)_{i \in [m,n]}$:
\[
  a_m=a; \; \begin{cases} a_m - t a_{m+1} = 0; & m \not \in I\\
  s a_m - a_{m+1} = 0 & m \in I \end{cases}
\]
\end{remark}

\begin{defn}
A submodule $P$ of an $R$-module $M$ is \emph{pre-pure} if it is pure as a $k[s]$- and as a $k[t]$-module.

Concretely, this means that if $t^b x = \beta$ has a solution $x \in M$ (with $b \geq 0$, $\beta \in P$) then it also has a solution $x \in P$, and similarly for the equation $s^a x = \beta$.
\end{defn}

\begin{lemma}\label{lemma:puritycharacterization}
The following are equivalent for a submodule $P<M$:
\begin{enumerate}
	\item $P$ is pre-pure and for each nonnegative integers $n$, $a$, $b$ and $\beta \in P$, the system of equations
\[
\begin{cases}
s^a x = 0\\
t^b x = \beta
\end{cases}
\]
is solvable in $M$ if and only if it is solvable in $P$. \label{lemma:puritycharacterization:atomicsystems}
	\item $P$ is pure. \label{lemma:puritycharacterization:pure}
	\item $P$ is pre-pure and every nondegenerate chain in $P$ is also nondegenerate in $M$. \label{lemma:puritycharacterization:preservesnondegeneracy}
	\item For every $a \in P$, $\height_P(a)=\height_M(a)$ and $\order_P(a)=\order_M(a)$. \label{lemma:puritycharacterization:preservesheightandorder}
\end{enumerate}
\end{lemma}
\begin{proof}
$\eqref{lemma:puritycharacterization:atomicsystems}\Rightarrow\eqref{lemma:puritycharacterization:pure}$:
We first consider the special case of a system of equations of the form
\[
\begin{cases}
s^a x = \beta_1\\
t^b x = \beta_2.
\end{cases}
\]
for $a,b>0$. Suppose that there exists a solution $z \in M$. Then by pre-purity, there exists $y \in P$ such that $s^a y = \beta_1$. Now consider the system of equations
\[
\begin{cases}
s^a x = 0\\
t^b x = \beta_2-t^b y.
\end{cases}
\]
Since $z-y$ solves this system in $M$, there must be a solution $w$ in $P$ as well. But then $y+w$ solves the original system in $P$.

Now let $Ax=\beta$ be an arbitrary linear equation in $M$ with $\beta \in P^m$, $A \in R^{m \times n}$. Let $\alpha=(\alpha_1,\dots,\alpha_m)$ be the first column of $A$. We consider two cases:
\begin{itemize}
	\item All coefficients are multiples of some coefficient $\alpha_i$ for some $i$, without loss of generality $i=1$. This happens if either $\alpha_1 \in k^\times$, all $\alpha_i \in (s)$, or all $\alpha_i \in (t)$.	Then by a Gauss elimination step, $Ax=\beta$ is equivalent to a system of the form
	\[
	\left(
	\begin{array}{c|c}
	\alpha_1 & l\\
	\hline
	0 & B
	\end{array}
	\right)  x = \beta.
	\]
	By induction, $B x_{\geq 2} = \beta_{\geq 2}$ has a solution in $P$, and the first equation
	\[
	\alpha_1 x_1 = \beta_1-l\cdot x_{\geq 2}
	\]
	is solvable with $x_1 \in P$ by hypothesis because $\alpha_1 = \lambda s^a$ or $\alpha_1=\lambda t^b$ for some $\lambda \in k^\times$. (It could also happen that $\alpha_1=0$, but in that case the above equation would not be solvable in $M$ unless $\beta_1 - l \cdot x_{\geq 2} = 0$.)
	\item There are two indices $i$ and $j$ such that $\alpha_i=\lambda s^a$ and $\alpha_j = \mu t^b$, $a,b>0$, $\lambda, \; \mu \in k^\times$, and such that every $\alpha_q$ is a multiple of either $\alpha_i$ or $\alpha_j$. We assume without loss of generality that $(i,j)=(1,2)$. Then by a Gauss eliminiation step, $Ax=\beta$ is equivalent to a system of the form
	\[
	\left(
	\begin{array}{c|c}
	s^a & l_1\\
	\hline
	t^b & l_2\\
	\hline
	0 & B
	\end{array}
	\right)  x = \beta.
	\]
	Once again, by induction, $Bx_{\geq 2} = \beta_{\geq 3}$ has a solution in $P$, and the remaining equations are
	\[
	\begin{cases}
	s^a x_1 = \beta_1 - l_1 \cdot x_{\geq 2}\\
	t^b x_1 = \beta_2 - l_2 \cdot x_{\geq 2}
	\end{cases}
	\]
	By the special case discussed before, this system has a solution $x_1 \in P$.
\end{itemize}

$\eqref{lemma:puritycharacterization:pure}\Rightarrow\eqref{lemma:puritycharacterization:preservesnondegeneracy}$:
Suppose that $P$ is a pure submodule, so in particular it is pre-pure. To show that every nondegenerate chain $C$ in $P$ stays nondegenerate in $M$, consider first the case that the inclusion $i\colon P \to M$ has a retraction $p\colon M \to P$, i.~e. that $P$ is a direct summand. Then $C = p(i(C))$, i.e. $i(C)$ is the preimage of a nondegenerate chain and hence nondegenerate.

Now let $M_i$ be a filtered diagram of modules with colimit $M$ and $\iota_i\colon P \to M_i$ compatible split inclusions (the splittings $p_i\colon M_i \to P$ do not have to commute with the maps in the diagram.) Suppose $C = C_1 + \dots + C_n$ becomes degenerate in $M$, with each $C_j$ strongly degenerate. By Lemma~\ref{lemma:degenerateinfinitechainchar}, we may without loss of generality assume that $C$ is finite. Then $C_1,\dots,C_n$ lie in a common module $M_i$ and are strongly degenerate there, thus $C$ is degenerate in $M_i$, a contradiction to the first case.

$\eqref{lemma:puritycharacterization:preservesnondegeneracy}\Rightarrow\eqref{lemma:puritycharacterization:preservesheightandorder}$: Let $P$ be pre-pure, $a \in P$ and
\[
\sigma=(m,n,I)=\height_P(a)\neq\height_M(a)=(m,n',I')=\tau.
\]
By Lemma~\ref{lemma:height-basicprops}, we must have $\sigma<\tau$.
By the definition of the order relation, there exists $k$ such that $I \cap [m,k]=I' \cap [m,k]$, $k \not\in I'$, and either $k=n$ or $k \in I$. The former case is precluded by pre-purity. The latter case would imply by Prop.~\ref{prop:degeneratechar} that a nondegenerate chain of signature $\sigma$ in $P$ would become degenerate in $M$, contrary to the assumption. The argument about the order is dual.

$\eqref{lemma:puritycharacterization:preservesheightandorder} \Rightarrow \eqref{lemma:puritycharacterization:atomicsystems}$:
Suppose that $P<M$ is a submodule with the property that $\height_P(a)=\height_M(a)$ and $\order_P(a)=\order_M(a)$ for all $a \in P$.

We consider the three cases of systems of equations:
\begin{enumerate}
	\item $t^b x = \beta$ for some $b \geq 1$, $\beta \in P_m$ has a solution $x \in M$. If the equation had no solution in $P$ then $\height_P(\beta)|_m^{m+b}<(m,n,\emptyset) = \height_M(\beta)|_m^{m+b}$ and thus $\height_P(\beta)<\height_M(\beta)$, a contradiction to the height condition.
	\item $s^a x = \beta$ for some $a \geq 1$, $\beta \in P_m$ has a solution $x \in M$. By the dual argument, a solution in $P$ must exist due to the order condition.
	\item $\begin{cases}s^a x = 0\\t^b x = \beta\end{cases}$ for some $a,b \geq 0$, $\beta \in P_m-\{0\}$ has a solution $x \in M$. We may without loss of generality assume that $s^{a-1}(x) \neq 0$ for all such solutions $x$ since we could otherwise decrease $a$.

	Thus there exists a upper closed chain $C$ in $\Ch^+(\beta)$ in $M$ adhering to the signature $\sigma=(m,m+a+b,[m+a,m+b-1])$, which by Lemma~\ref{lemma:normalizedchains} we may assume to be nondegenerate. By Lemma~\ref{lemma:heightorderprops}, $\sigma \leq \height_M(\beta)=\height_P(\beta)$. If $\sigma = \height_P(\beta)$ then we are done, since a chain of signature $\sigma$ in $P$ gives a solution in $P$ to the linear system. If $\sigma < \height_P(\beta)$ then the definition of the order implies that either there exists a solution $x \in P$ with $s^{a'}=0$ for $a'<a$ (contradicting minimality of $a$) or there exists a longer nondegenerate chain $C' \in \Ch^+(\beta)$ with larger signature in $P$, which means that $C'|_m^{m+a+b}$ still solves the system. \qedhere
\end{enumerate}

\end{proof}

\subsubsection{Pure-injectivity}

To study pure-injectivity in $R$-modules, we will have to consider finite and infinite systems of linear equations over $R$. Fortunately, in our case, this condition can be reduced to statements about finite and infinite chains.

\begin{thm}\label{thm:pureinjectivity}
Let $N$ be an $R$-module. Then $N$ is pure-injective if and only if every element of $M$ is a limit element.
\end{thm}

\begin{proof}
Only the ``if'' direction needs proof. Let $A \subseteq B$ be a pure submodule and $f\colon A \to N$ a homomorphism. We need to show that $f$ extends to $B$ if every element of $N$ is a limit element. The argument is somewhat inspired by \cite[Proof of Thm.~5.1.2]{mekler-eklof:almost-free-modules}.

Let us call a an extension $\tilde g\colon \tilde B \to N$  of $f$, where $A \leq \tilde B \leq B$, a \emph{partial homomorphism} if for every system of linear equations
\[
My=b \quad (M \in R^{n \times m}, b \in \tilde B^n)
\]
that is solvable in $B$, the system 
\[
My=\tilde g(b)
\]
is solvable in $N$. 

By an argument completely analogous to the proof of $\eqref{lemma:puritycharacterization:atomicsystems}\Rightarrow\eqref{lemma:puritycharacterization:pure}$ in Lemma~\ref{lemma:puritycharacterization}, $(\tilde B,\tilde g)$ is a partial homomorphism if and only if
\begin{itemize}
	\item If $u y = b$ is solvable in $B$ for some $b \in \tilde B$ and $u \in \{s^a,t^b\}$ then $u y = \tilde g(b)$ is solvable in $N$; and
	\item If $\{s^a y = 0; \; t^b y = b\}$ is solvable in $B$ for some $b \in \tilde B$, then $\{s^a y = 0; \; t^b y = \tilde g(b)\}$ is solvable in $N$.
\end{itemize}

The set of partial homomorphism is ordered in the obvious way, nonempty since $(A,f)$ is in it, and satisfies the ascending chain condition, so Zorn's Lemma gives a maximal partial homomorphism $\tilde B \to N$.

 If $B=\tilde B$ then we have found an extension, so suppose that $c \in B-\tilde B$ is an element in the complement. Choose $c$ of minimal degree, so that $t(c) \in \tilde B$.

We consider the following cases:
\begin{enumerate}
	\item $c$ has finite height $\sigma=(m,n,I)$ and there exists a nondegenerate chain in $\Ch_\sigma^+(c)$ all of whose elements lie in $B-\tilde B$. \label{thm:pureinjectivity:finiteopenchain}
	\item There exists an $n$ and a nondegenerate chain $C \in \Ch^+(c) \cap \Ch_{m,n}$ with top element in $\tilde B_n$. \label{thm:pureinjectivity:finitechain}
	\item $c$ has infinite height and there exist, for every $n \geq m$, a nondegenerate chain $C_n \in \Ch^+(c) \cap \Ch_{m,n}$ all of whose elements lie in $B-\tilde B$. \label{thm:pureinjectivity:infinitechain}
\end{enumerate}
\begin{figure}[ht]
\begin{tikzpicture}[>=stealth]
\node at (1,-2) {Case (1)};
\node at (0,-1) {$\tilde B$};
\node at (2,-1) {$B-\tilde B$};
\node(tc) at (0,0) {$t(c)$};
\node(c) at (2,1){$c$};
\node(cm) at (2,2) {$c_{m+1}$};
\node(cdots) at (2,3) {$\vdots$};
\node(cn) at (2,4) {$c_n$};
\draw[->] (c) -- (tc);
\draw[<->] (cm) --(c);
\draw[<->] (cdots) -- (cm);
\draw[<->] (cn) -- (cdots);
\end{tikzpicture} \qquad
\begin{tikzpicture}[>=stealth]
\node at (1,-2) {Case (2)};
\node at (0,-1) {$\tilde B$};
\node at (2,-1) {$B-\tilde B$};
\node(tc) at (0,0) {$t(c)$};
\node(c) at (2,1){$c$};
\node(cm) at (2,2) {$c_{m+1}$};
\node(cdots) at (2,3) {$\vdots$};
\node(cn) at (0,4) {$c_n$};
\draw[->] (c) -- (tc);
\draw[<->] (cm) --(c);
\draw[<->] (cdots) -- (cm);
\draw[<-] (cn) -- (cdots);
\end{tikzpicture}\qquad
\begin{tikzpicture}[>=stealth]
\node at (1,-2) {Case (3)};
\node at (0,-1) {$\tilde B$};
\node at (2,-1) {$B-\tilde B$};
\node(tc) at (0,0) {$t(c)$};
\node(c) at (2,1){$c$};
\node(cm) at (2,2) {$c_{m+1}$};
\node(cdots) at (2,3) {$\vdots$};
\draw[->] (c) -- (tc);
\draw[<->] (cm) --(c);
\draw[<->] (cdots) -- (cm);
\end{tikzpicture}
\caption{Three possible cases for how $c$ is connected to $\tilde B$}
\end{figure}
\emph{Cases \eqref{thm:pureinjectivity:finiteopenchain} and \eqref{thm:pureinjectivity:finitechain}}:

In case \eqref{thm:pureinjectivity:finiteopenchain}, let $C=(c_{m-1},\dots,c_n)$ be a chain with $c_{m-1}=t(c)$, $c_m=c$ and $C|_m^n$ is nondegenerate. Then $C$ is a solution to a finite system of linear equations with right hand side in $\tilde B$. By the partial homomorphism condition for $\tilde B$, there is a chain $D=(d_{m-1},\dots,d_n) \in \Ch_{\sigma}^+(\tilde g(t(c)))$ in $N$

Similarly, in case \eqref{thm:pureinjectivity:finitechain}, a shortest nondegenerate chain $C =(c_{m-1},\dots,c_n)$ with $c_{m-1}=t(c)$, $c_m = c$, and $c_n = b$ is a solution to a finite system of linear equations with right hand side in $\tilde B$. By the partial homomorphism condition for $\tilde B$, there is a chain $D=(d_{m-1},\dots,d_n)$ from $d_{m-1}=g(c_{m-1})=g(t(c))$ to $g(c_b)=g(b)$ in $N$.

In either of the cases, let $B' = \tilde B + \langle C\rangle$, the sum of $\tilde B$ and the chain module spanned by $C$.

Define $h\colon B' \to N$ as the extension of $\tilde g$ defined by $h(c_i) = d_i$. Note that this is well-defined by construction.

To see that $(B',h)$ is a partial homomorphism, it suffices to consider equations of the form $uy=c_i$ for $u \in \{s^a,t^b\}$ and pairs of equations of the form $\{s^a y = 0;\; t^b y = c_i\}$ for $m \leq i < n$. 

If $s^a y=c_i$ is solvable for some $y \in B$, $i \in [m,n]$, and $a \geq 1$ then firstly, $i-a\geq m$ by the minimality of $m$. Moreover, $s^a c_{i-a}=c_i$ because $C$ is nondegenerate. Thus $s^a h(c_{i-a}) = h(c_i)$ provides the desired solution in $N$.

If $t^b y = c_i$ is solvable for some $y \in B$, $i \in [m,n]$, and $b \geq 1$ then it is also solvable for some $y \in B'$ by construction and the nondegeneracy of $C$. Thus again, $t^b h(y)=h(c_i)$. In fact, we can choose $y=c_{i+b}$, and we must have that $i+b\leq n$.

Finally, suppose that $\{s^a y = 0;\; t^b y = c_i\}$ is solvable for some $y \in B$, $i \in [m,n]$, and $a,\;b \geq 1$. Let $c' = s^a c_{i+b}$. If $c'=0$ then we are done by the previous case, so assume the opposite. Then there are two possibilities. Either $i+a+b<n$, in which case $c' = c_{i+a+b}$. Since another solution $y$ exists and defines a chain adhering to a larger signature, the chain $C$ must be degenerate, a contradiction. In the other case, $c' \in \tilde B$.
Consider $z=c_{i+b}-y$. We have that $t^bz = 0$ and $s^a z = c'$. Thus there exists a solution $w \in N$ to $t^b w = 0$, $s^a w = \tilde g(b')$. Then $y=w-h(c_{i+b})$ solves $\{s^a y = 0;\; t^b y = h(c_i)\}$.

Now consider case~\eqref{thm:pureinjectivity:infinitechain}. For every $n \geq m$, $n \not\in I$, let $C_n=(c_{n,m},\dots,c_{n,n})$ be a nondegenerate upper closed chain in $\Ch_{\sigma|_m^{n}}^+(c)$. If the complement of $I$ is finite (implying that $c$ is a limit element), let $C_\infty \in \Ch_\sigma^+(c)$ be a nondegenerate chain.

Now let
\[
B' = \tilde B + \sum_{i} \langle C_i\rangle,
\]
where $i$ runs through those indices $m \leq i \leq \infty$ for which $C_i$ is defined. If $t(c)=0$ then we can extend $\tilde g\colon \tilde B \to N$ to $B'$ by zero on all $C_i$ summands, and we are done. Thus suppose $t(c) \neq 0 \in \tilde B$.

Then the augmented chains $C_n \cup \{t(c)\}$ can be lifted to chains $D_n \in \Ch_{m,n}(N) \cap \Ch^+(\tilde g(t(c))$ by case \ref{thm:pureinjectivity:finiteopenchain}, which however may be degenerate. Nonetheless, by Lemma~\ref{lemma:extendingchainsonlimitelements} there exists a chain $D=(d_{m-1},d_m,\dots) \in \Ch^+(\tilde g(t(c)))$ adhering to the signature $\sigma$ extended down to degree $m-1$ (if $m>0$). 
Now define $h\colon B' \to N$ by $\tilde g$ on $\tilde B$ and
\[
h(c_{n,i})= d_i.
\]
Again, by construction, $h$ is an extension of $\tilde g$ to $B'$.

Now, repeating the argument to show that $(B',h)$ is a partial homomorphism, consider the basic linear equations one at a time.

Suppose $s^a y = \gamma$ has a solution in $B$, where $\gamma=\sum_{n=i}^\infty \lambda_n c_{n,i}$ and all but finitely many $\lambda_n$ are zero. Then $i-a \geq m$ as before and
\[
s^a \left(\sum_{n=i}^\infty \lambda_n c_{n,i-a}\right) = \gamma.
\]
Applying $h$ to both sides gives a solution in $N$.

If $t^b y = \gamma$, with notations as in the previous case and $\sum_n \lambda_n=1$, we have that $t^{i-a} \gamma = c$ and $h(\gamma)=d_i$. Since the chain $(c=t^{b+i-a}y,\dots,ty,y)$ is nondegenerate, the height of $c$ must be at least as large, and there must exist an $n'$ such that $t^b c_{n',i+b} = c$. Since $h(c_{n,i})=d_i$ for any $n$, we have a solution $t^b h(c_{n',i+b}) = d_i=h(\gamma)$.

As for the last case, suppose $\{s^a y = 0;\; t^b y = \gamma\}$ is solvable for some $y \in B$ and $\gamma$ as above. Let $y'=c_{n',i+b}$ be the solution to $t^b y = \gamma$ constructed above and $c'=s^ay'$. As before, we may assume $c'\neq 0$ because otherwise we have found a solution $h(y')$. Again, we study two possibilities. Either $i+a+b<n$, in which case $c' = c_{n',i+a+b}$. Since another solution $y$ exists and defines a chain adhering to a larger signature, the chain $C$ must be degenerate, a contradiction. In the other case, $c' \in \tilde B$.
Consider $z=y'-y$. We have that $t^bz = 0$ and $s^a z = c'$. Thus there exists a solution $w \in N$ to $t^b w = 0$, $s^a w = \tilde g(b')$. Then $y=w-h(c_{i+b})$ solves $\{s^a y = 0;\; t^b y = h(\gamma)\}$.
\end{proof}

\begin{corollary}
Arbitrary direct sums of pure-injective $R$-modules are pure-injective, and any chain module is pure-injective. \qed
\end{corollary}
\begin{proof}
Let $M_i$ be a collection of pure-injective $R$-modules and $a \in M = \bigoplus_{i} M_i$. Then $a$ is contained in a finite subsum $M'$, and its height in $M'$ is the same as its height in $M$ by Lemma~\ref{lemma:puritycharacterization} because inclusions of direct summands are pure. Since finite sums (finite products) of pure-injective modules are pure-injective, $a$ is a limit element in $M'$ and thus also in $M$.

That chain modules are pure-injective follows immediately from Theorem~\ref{thm:pureinjectivity}.
\end{proof}

\begin{thm}\label{thm:pureinjclassification}
The pure-injective $R$-modules exactly the direct sums of chain modules.
\end{thm}
\begin{proof}
We first show:

\noindent\emph{Claim: }Every nontrivial pure-injective $R$-module has a chain module as a direct summand.

Indeed, let $c \in M$ be a nonzero element of minimal degree. Let $C$ be a nondegenerate chain in $\Ch_{\height(c)}^+(c)$, which exists by pure-injectivity, and let $N<M$ be the chain module generated by it.
Then the inclusion $N<M$ is pure by Lemma~\ref{lemma:puritycharacterization}\eqref{lemma:puritycharacterization:preservesheightandorder} and $N$ itself is pure-injective, so the inclusion $N \to M$ has a retraction, proving the claim.

Now consider the set $S$ of all collections $\mathcal C=(C_i) \subseteq \Ch_0^+$ of nondegenerate chains in $M$ with bottom element in degree $0$ such that
\begin{itemize}
	\item Every $C_i$ spans a pure chain submodule $M_i$ of $M$;
	\item The map $N_{\mathcal C} = \bigoplus_i M_i \to M$ is injective.
\end{itemize}
The set $S$ is ordered by inclusion and nonempty by the claim. For every $\mathcal C \in S$, $N_{\mathcal C}<M$ is pure. Since directed colimits of pure submodules are pure, and directed colimits of direct sums by maps that are inclusions of summands are again direct sums, the ascending chain condition in Zorn's lemma is satisfied, so let $\mathcal C$ be a maximal element and $N=N_{\mathcal C}$ the associated module. Then $N \to M$ is split because $N$ is pure-injective by the corollary. Let $Q$ be a complement of $N$ in $M$. Then $Q_0=0$ because otherwise, by the claim, we could choose a new pure and pure-injective submodule of $Q$, contradicting maximality.
Repeating this process for the downward shift $Q(-1)$ inductively, we obtain an isomorphism of $M$ with a direct sum of chain modules.
\end{proof}

\begin{corollary}\label{cor:pureinjectivityonkert}
An $R$-module $M$ is pure-injective if and only if every element in $\ker(t)$ is a limit element.
\end{corollary}
\begin{proof}
We show by induction that for each $k \geq 1$, $\ker(t^k)$ consists of limit elements, the case $k=1$ given by assumption. This suffices since $M$ is nonnegatively graded and hence, for every $a \in M$, $t^ka=0$ for $k>|a|$.

Suppose $a \in M_m-\{0\}$ with $t^{k+1}(a)=0$ and $b=t^k(a) \neq 0$. Let $\sigma = \height(a)$. If $\sigma$ is finite then there is nothing to show, so assume $\sigma=(m,\infty,I)$ is infinite. For $n\geq m$, let $C_n$ be a nondegenerate chain in $\Ch^+_{\sigma|_m^n}(a)$
Let $\sigma'=(m-k,\infty,I)$ and $C'_n \in \Ch^+_{\sigma'|_m^n}(b)$ the unique extension of $C_n$. ($C'_n$ may be degenerate.) 

Since $t(b)=0$, $b$ is a limit element, and there exists a nondegenerate chain $D' \in \Ch^+_\tau(b)$, where $\tau = \height(b)$. Since $\tau\geq\sigma'$, $\tau$ has to have length at least $k$. Thus let $d = D'_m$ be the element in degree $m$. Since $\tau \geq \sigma'$, $\tau|_m^\infty \geq \sigma$ and by Lemma~\ref{lemma:filtering}, there exists a chain $D \in \Ch_{\sigma}^+(d)$.

Since $C_n - D|_m^n \in \Ch^+_{\sigma|_m^n}(a-d)$ for all $n$ and $t^k(a-d)=b-b=0$, by induction and Lemma~\ref{lemma:extendingchainsonlimitelements}, there exists a chain $K \in \Ch_{\sigma}(a-d)$. Thus $K+D \in \Ch_\sigma^+(a)$.
\end{proof}

% \subsection{The pure-injective hull}

% Let $P$ be a pure submodule of $M$. Then $M$ is called a \emph{pure-essential extension} of $P$ if every homomorphism $f\colon M\to Q$ such that $f|_P$ is pure, is itself pure.

% \begin{thm}\label{thm:pihull}
% The following are equivalent for $P<M$:
% \begin{enumerate}
% 	\item $M$ is a maximal pure-essential extension of $P$.
% 	\item $M$ is a minimal pure, pure-injective extension of $P$.
% 	\item $M$ is a pure-essential and pure-injective extension of $P$.
% \end{enumerate}
% If these conditions are satisfied, then $M/P$ is injective.
% \end{thm}

% The module $M$ is uniquely defined up to noncanonical isomorphism and is called the \emph{pure-injective hull} of $P$.

We next address the question of uniqueness of the decomposition of Thm.~\ref{thm:pureinjclassification}. This is essentially the Krull-Schmidt-Azumaya theorem in a graded context. We use it in the form proved by Gabriel:

\begin{thm}[{\cite[\textsection I.1, Th\'eor\`eme 1]{gabriel:categories-abeliennes}}] \label{thm:gabriel-krull-schmidt}
Let $\mathcal C$ be a Grothendieck category. If $(M_i)_{i \in I}$ and $(N_j)_{j \in J}$ are two families of indecomposable objects in $\mathcal C$ with local endomorphism rings such that $\bigoplus_{i\in I} M_i \cong \bigoplus_{j \in J} N_j$ then there exists a bijection $h\colon I \to J$ such that $M_i \cong N_{h(i)}$. \qed
\end{thm}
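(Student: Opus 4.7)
This is the classical Krull--Schmidt--Azumaya theorem, stated for Grothendieck categories. My strategy follows Azumaya's original argument, with the Grothendieck category axioms (existence of a generator, exactness of filtered colimits) controlling the infinite coproducts that naively would not make sense.

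\textbf{Step 1 (exchange lemma).} I first establish: if $M \cong A \oplus B \cong \bigoplus_{j \in J} N_j$ with $A$ and every $N_j$ indecomposable and with local endomorphism ring, then $A \cong N_{j_0}$ for some $j_0 \in J$, and moreover $M \cong A \oplus \bigoplus_{j \neq j_0} N_j$ compatibly with the inclusion of $A$. Let $\iota\colon A \hookrightarrow M$, $\pi\colon M \to A$ satisfy $\pi\iota = \operatorname{id}_A$, and let $\iota_j, \pi_j$ be the structure maps of the second decomposition. Consider $f_j := \pi\iota_j\pi_j\iota \in \End(A)$ for $j \in J$. Using a generator $U$ of $\mathcal C$, any morphism $U \to A$ composed with $\iota$ factors through a finite subcoproduct of $\bigoplus_j N_j$, so only finitely many $f_j$ are nonzero on that morphism and their finite sum equals $\operatorname{id}_A$ there. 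Locality of $\End(A)$ then forces some $f_{j_0}$ to lie outside the maximal ideal; it is therefore invertible, which in turn makes $\pi_{j_0}\iota\colon A \to N_{j_0}$ and $\pi\iota_{j_0}\colon N_{j_0} \to A$ mutually inverse isomorphisms. A standard idempotent swap then replaces $N_{j_0}$ by $A$ in the second decomposition.

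\textbf{Step 2 (Zorn's lemma).} Consider the poset of triples $(I_0, h_0, \varphi_0)$ where $I_0 \subseteq I$, the map $h_0\colon I_0 \hookrightarrow J$ is injective with $M_i \cong N_{h_0(i)}$ for $i \in I_0$, and $\varphi_0$ is a fixed isomorphism
\[
\bigoplus_{i \in I} M_i \;\cong\; \bigoplus_{i \in I_0} N_{h_0(i)} \;\oplus\; \bigoplus_{i \in I \setminus I_0} M_i
\]
that restricts to the chosen isomorphism $M_i \cong N_{h_0(i)}$ on each $i \in I_0$. Chains have upper bounds via filtered colimits (exact in a Grothendieck category), so Zorn produces a maximal element $(I_0, h, \varphi)$. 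If $I_0 \neq I$, pick $i_* \in I \setminus I_0$ and apply Step 1 to the summand $M_{i_*}$ inside the right-hand decomposition to obtain $j_* \in J \setminus h(I_0)$ with $M_{i_*} \cong N_{j_*}$ together with a compatible exchange, contradicting maximality. A symmetric argument (exchanging the roles of $M_i$ and $N_j$) shows $h$ is surjective, yielding the desired bijection.

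\textbf{Main obstacle.} The genuinely nontrivial point is the exchange lemma: when $J$ is infinite, the equation ``$\sum_j f_j = \operatorname{id}_A$'' has no literal meaning as a sum of morphisms in $\End(A)$. The workaround is to never form this sum globally, but only to check the identity after precomposition with maps from a generator, where it reduces to a genuine finite sum. Once this local-to-finite reduction is in place, the rest is the standard finite Krull--Schmidt argument using locality of endomorphism rings, combined with a routine Zorn's lemma to patch the partial bijections together.
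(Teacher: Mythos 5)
The paper offers no proof of this statement; it is imported from Gabriel's thesis with a bare citation, so your attempt must be judged on its own terms. It contains a genuine gap at exactly the point you yourself identify as the main obstacle. In Step 1 you assert that, for a generator $U$, every morphism $U \to A$ composed with $\iota\colon A \to \bigoplus_{j\in J} N_j$ factors through a finite subcoproduct. That assertion says precisely that $U$ is a \emph{finitely generated} object, and this is not one of the axioms of a Grothendieck category: AB5 plus the existence of a generator does not provide a finitely generated generator, nor even a generating family of finitely generated objects. (For instance, in the category of sheaves of abelian groups on the real line, every finitely generated sheaf maps trivially to the extension by zero of $\Z$ on a bounded open interval, and one can check that no generator of that category has your factorization property.) So the ``local-to-finite reduction'' on which everything rests is unavailable, and with it the conclusion that some $f_{j_0}$ is a unit---which was the entire content of the infinite case, the finite case being classical.

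The theorem nevertheless holds because locality of $\End(A)$ together with AB5 can do the work you wanted compactness to do, applied to the finite partial sums of your $f_j$ rather than to elements of $A$. For finite $F \subseteq J$ set $e_F = \sum_{j\in F} f_j = \pi p_F \iota$, where $p_F$ is the idempotent of $\bigoplus_{j\in J} N_j$ with image $\bigoplus_{j\in F} N_j$, and set $A_F = \iota^{-1}\bigl(\bigoplus_{j\in F} N_j\bigr)$. Since $\iota(A_F) \subseteq \bigoplus_{j\in F} N_j$, the endomorphism $\operatorname{id}_A - e_F$ vanishes on $A_F$; hence if $e_F$ is a non-unit, then $\operatorname{id}_A - e_F$ is a unit and $A_F = 0$. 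As AB5 gives $A = \bigcup_F A_F$, not every $e_F$ can be a non-unit (else $A = 0$, impossible when $\End(A)$ is local), so some $e_F$ is a unit, exhibiting $A$ as a direct summand of the finite sum $\bigoplus_{j\in F} N_j$; from there your finite argument with locality and the idempotent swap is correct. Be aware, finally, that Step 2 is also not routine: in a chain of your triples the isomorphisms $\varphi_\alpha$ are unconstrained on the unexchanged summands $\bigoplus_{i \notin I_\alpha} M_i$, so they need not stabilize, and the colimit of a chain is not obviously a triple of the required shape; moreover surjectivity of $h$ does not follow by ``symmetry'' alone---the symmetric argument yields an injection $J \to I$, after which one still needs a Cantor--Schr\"oder--Bernstein argument within each isomorphism class to equate multiplicities. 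These points can be repaired, but they are where published proofs spend their effort; ``routine Zorn'' understates them.
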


\begin{corollary}\label{cor:uniquemodpsplitting}
Let $M \cong \bigoplus_{j=1}^{m} M(\sigma_j) \xrightarrow[\cong]{\phi} \bigoplus_{i=1}^{m'} M(\sigma'_k)$. Then the sets $\{\sigma_j\}_j$ and $\{\sigma'_i\}_i$ are equal.
\end{corollary}
\begin{proof}
A Grothendieck category is an AB5 category with a generator, and the category of nonnegatively graded $R$-modules satisfies these conditions. 
We also have that $\End(M(\sigma)) \cong k$ is indeed local. Thus Theorem~\ref{thm:gabriel-krull-schmidt} applies to give the result.
\end{proof}

\section{\texorpdfstring{The classification of abelian $p$-torsion Hopf algebras}{The classification of abelian p-torsion Hopf algebras}} \label{sec:p-torsion-hopf-alg}

With the results of the previous section in hand, all ingredients are in place to prove the first main theorem.

Continue to let $k$ be a perfect field of characteristic $p$. As an object of an abelian category, any Hopf algebra $H$ has a multiplication-by-$p$ map, classically denoted by $[p]\colon H \to H$. This map is given by the $p$-fold comultiplication followed by the $p$-fold multiplication:
\[
[p]\colon H \xrightarrow{\Delta^{p-1}} H^{\otimes p} \xrightarrow{\mu^{p-1}} H.
\]

In this section, we classify pure-injective $p$-torsion Hopf algebras. From an algebro--geometric point of view, $p$-torsion Hopf algebras represent $\F_p$-module schemes rather than abelian group schemes. Since the Dieudonné equivalence is an equivalence of abelian categories, the full subcategory of $p$-torsion Hopf algebras is equivalent to the category of $p$-torsion Dieudonné modules. By Lemma~\ref{lemma:splitting}, the category of $p$-typical $p$-torsion Hopf algebras (of any type) is thus equivalent to the category of nonnegatively graded modules over $R=\DRing/(p)\cong k[s,t]/(st)$.

\begin{lemma}\label{lemma:pureinjectivityequivalence}
A $p$-torsion Hopf algebra $H$ is pure-injective (in the sense of the definition given in the introduction) iff the Dieudonné modules $D^{(j)}(H)$ are pure-injective (in the sense of Subsection~\ref{sec:purity}) for all $j>0$.
\end{lemma}
\begin{proof}
Since the $F$ and $V$ operators preserve the $p$-typical splitting, it is clear that $H$ is pure-injective iff all of its $p$-typical factors are pure-injective, thus assume that $H$ is pure-injective of type $j$. Under the Dieudonné equivalence $D^{(j)}\colon \Hopf \to \Mod_{\mathcal R}$, we have that
\[
D^{(j)}(F) = s \quad \text{and} \quad D^{(j)}(V)=t.
\]
Moreover, $P(H)\cong \ker(s)\colon D^{(j)}(H) \to D^{(j)}(H)$, and hence every $x \in P(H)$ corresponds to an element $D^{(j)}(x) \in \ker(s)$. (This is a slight abuse of notation, since there is no natural transformation $H \to D^{(j)}(H)$.)

Thus the system of equations
\[
x_0 = x; V(x_i) = F^{n_i} x_{i-1}
\]
from the definition of pure-injectivity of Hopf algebras translates to a system of equations
\[
y_0 = D^{(j)}(x); t(y_i) = s^{n_i} y_{i-1}
\]
The result follows from Lemma~\ref{lemma:extendingchainsonlimitelements} and Cor.~\ref{cor:pureinjectivityonkert}.
\end{proof}

\begin{proof}[Proof of Theorem~\ref{thm:modpclassification}]
Let $r=j p^{r'}$ with $p \nmid j$.

Define $H(r,m,I) = D^{(j)} \Sigma^{r'} M(m,I)$, using the $p$-typical Dieudonné equivalence \eqref{eq:ptypicaldieudonne}. By Lemma~\ref{lemma:splitting}, every Hopf algebra splits naturally and uniquely into a tensor product of $p$-typical parts of various types, so it suffices to show the theorem for $p$-typical Hopf algebras of type $j$. The equivalence of pure-injectivity as used in Thm.~\ref{thm:modpclassification} and pure-injectivity of Dieudonné modules is Lemma~\ref{lemma:pureinjectivityequivalence}, the existence of the splitting follows from Corollary~\ref{thm:pureinjclassification}, and the uniqueness from Corollary~\ref{cor:uniquemodpsplitting}.

It suffices to show that the characterization of $H(r,m,I)$ in the introduction is correct. Let $H$ be any Hopf algebra isomorphic as algebras to 
\[
k[x_0,x_1,\dots,x_m]/\Biggl(x_{i-1}^p - \begin{cases} x_{i}; &i \in I\\0;& i \not\in I\end{cases}\Biggr), \quad |x_i| = rp^i.
\]
Then $H$ is $j$-typical, and $M=D^{(j)}(H) \cong \langle x_0,x_1,\dots,x_m\rangle$ with $s(x_{i-1})=x_i$ iff $i \in I$. In order for $H$, and hence $M$, to be indecomposable, it is necessary that $tx_i = x_{i-1}$ iff $i \not\in I$, because otherwise, $M$ would split as $\langle x_0,\dots,x_{i-1}\rangle \oplus \langle x_i,\dots,x_m\rangle$. Thus $M \cong \Sigma^{r'} M(m,I)$.  
\end{proof}

\begin{example}
Primitively generated Hopf algebras are Hopf algebras for which the canonical map $PH \to \tilde H \to QH$ from primitives to indecomposables is surjective. For the associated $\DRing$-modules $M$, this translates to the map $\ker(t) \to \coker(s)$ being surjective. Thus for each $m \in M_i$, there exists an $m' \in M_{i-1}$ such that $t(m+sm')=0$. By induction, $tm'=0$ and hence $tm=0$. This shows that primitively generated Hopf algebras have trivial Verschiebung, and in particular are $p$-torsion and pure-injective. Hence they split into copies of $H(r,m,I)$. The Hopf algebra $H(r,m,I)$ is primitively generated exactly if $I=[1,m]$, recovering the abelian case of the classification in \cite[Theorem~7.16]{milnor-moore:hopf} and generalizing it to non-finite-type Hopf algebras. This is not new -- by \cite{webb:graded-modules}, \emph{any} connected $\F_p[s]$-module splits into a sum of cyclic modules.
\end{example}

\section{Hopf algebras that are free as algebras or cofree as coalgebras}

A classification of general abelian Hopf algebras seems too much to ask for, even in the finite-type case; we will give some hopefully illuminating examples of the complexity of the problem in the next section. However, we can classify the Hopf algebras that reduce to the Hopf algebras $H(r,m,I)$ of Section~\ref{sec:p-torsion-hopf-alg} and use this to classify all Hopf algebras that are either free as algebras or cofree as coalgebras and whose mod-$p$ reduction is pure-injective.

\begin{remark}
The forgetful functor from the category $\Hopf$ to connected, graded, commutative algebras has a right adjoint, the cofree Hopf algebra on an algebra. Similarly, the forgetful functor from $\Hopf$ to connected, graded, cocommutative coalgebras has a left adjoint, the free Hopf algebra on a coalgebra. A Hopf algebra that is free on a coalgebra is also free as an algebra, but not vice versa. Similarly, a Hopf algebra that is cofree on an algebra is also cofree as a coalgebra, but not vice versa.
\end{remark}

\begin{defn}
A Hopf algebra $H$ is called \emph{basic} if its mod-$p$ reduction $H/[p]$ is indecomposable pure-injective and $H_1 \neq 0$.
\end{defn}

By Theorem~\ref{thm:modpclassification}, $H$ is thus basic iff $H/[p] \cong H(0,m,I)$ for some $0 \leq m \leq \infty$ and $I \subseteq [1,m]$.

We will use the graphical model of ``basic Hopf graphs'' from the introduction for indexing basic Hopf algebras. Projection to the $x$-axis associates to each basic Hopf graph $\Gamma$ a pair $(m_\Gamma,I_\Gamma)$, where $m$ is the maximal $x$-coordinate of all arrow end point (possibly $\infty$) and $i \in I$ iff there is an arrow $(i-1,j)$ to $(i,j')$ for some $j$, $j'$. Moreover, each Hopf graph $\Gamma$ comes with a vector $v_\Gamma$ of length $m_\Gamma+1$ consisting of the $y$-coordinates of the vertices of $\Gamma$.

Conversely, a pair $(m,I)$ and a vector $v$ of length $m+1$ gives rise to a unique basic Hopf graph iff
\begin{enumerate}
	\item $v_0=0$
	\item if $i \in I$ then $v_{i} \in \{v_{i-1}-1,v_{i-1}\}$;
	\item if $i \not \in I$ then $v_i \in \{v_{i-1}, v_{i-1}+1\}$;
	\item if $m < \infty$ then $v_m=0$.
\end{enumerate}

Fig.~\ref{fig:hopfgraphone} and \ref{fig:hopfgraphtwo} show the two examples of basic Hopf graphs from the introduction together with data $m,I,v$ explained above.

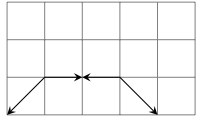
\begin{figure}[ht]
\begin{tikzpicture}[scale=0.5]
\draw[gray,very thin] (0,0) grid (5,3);
\draw[stealth-] (0,0) -- (1,1);
\draw[-stealth] (1,1) -- (2,1);
\draw[stealth-] (2,1) -- (3,1);
\draw[-stealth] (3,1) -- (4,0);
\end{tikzpicture}
\caption{$m_\Gamma=4$, $I_\Gamma = \{2,4\}$, $v_\Gamma = (0,1,1,1,0)$}\label{fig:hopfgraphone}
\end{figure}

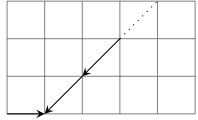
\begin{figure}[ht]
\begin{tikzpicture}[scale=0.5]
\draw[gray,very thin] (0,0) grid (5,3);
\draw[-stealth] (0,0) -- (1,0);
\draw[stealth-] (1,0) -- (2,1);
\draw[stealth-] (2,1) -- (3,2);
\draw[dotted] (3,2) -- (4,3);
\end{tikzpicture}
\caption{$m_\Gamma=\infty$, $I_\Gamma = \{1\}$, $v_\Gamma = (0,0,1,2,3,\dots)$}\label{fig:hopfgraphtwo}
\end{figure}

\begin{defn}
Let $\Gamma$ be a basic Hopf graph. Define an $\DRing$-module $M(\Gamma)$ as follows:
\[
M(\Gamma)_i = \begin{cases}
W(k)/p^{(v_\Gamma)_i+1}\langle x_i\rangle; & i \leq m_\Gamma\\
0; & i > m_\Gamma
\end{cases}
\]
together with $s$- and $t$-multiplications given, for $1 \leq i \leq m$, by
\begin{equation}\label{eq:staction}
sx_{i-1} = \begin{cases}
x_i; & i \in I_\Gamma\\
px_i; & i \not \in I_\Gamma
\end{cases}; \quad tx_i = \begin{cases} 
px_{i-1}; &i \in I_\Gamma\\
x_{i-1}; &i \not \in I_\Gamma.
\end{cases}
\end{equation}
\end{defn}

We call an $\DRing$-module $M$ \emph{basic} if its associated Hopf algebra $(D^{(j)})^{-1}(M)$ is basic, or equivalently, by Theorem~\ref{thm:modpclassification}, if $M/(p) \cong M(m,I)$ for some $(m,I)$.

\begin{thm}\label{thm:classificationofbasicmodules}
An $\DRing$-module $M$ is basic with $M/(p) \cong M(m,I)$ if and only if $M \cong M(\Gamma)$ for some basic Hopf graph $\Gamma$ with $(m_\Gamma,I_\Gamma)=(m,I)$. Moreover, $M$ is uniquely determined up to isomorphism by the properties
\begin{enumerate}
	\item $t\colon M_i \to M_{i-1}$ is injective iff $\Gamma$ contains an arrow from $(i,j)$ to $(i-1,j)$ or an arrow from $(i-1,j)$ to $(i,j-1)$, and
	\item $s\colon M_{i-1} \to M_i$ is injective iff $\Gamma$ contains an arrow from $(i-1,j)$ to $(i,j)$ or an arrow from $(i,j)$ to $(i-1,j-1)$.
\end{enumerate}
\end{thm}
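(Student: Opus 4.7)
The plan is to verify three claims in order: first, $M(\Gamma)$ is basic with $M(\Gamma)/(p) \cong M(m_\Gamma, I_\Gamma)$ and satisfies (1)--(2); second, every basic $\DRing$-module is isomorphic to some $M(\Gamma)$; third, $\Gamma$ is recovered uniquely from the injectivity data. The structural observation that unlocks the whole argument is that $t$ vanishes on $M_0$ (since $M_{-1}=0$), so the relation $p=st$ forces $pM_0=0$ and hence $M_0\cong k$ in any basic module. For the first claim, setting $p=0$ in \eqref{eq:staction} directly converts the defining relations of $M(\Gamma)$ into those of $M(m_\Gamma,I_\Gamma)$. The injectivity criterion is a case analysis on the four arrow types between columns $i-1$ and $i$: a horizontal-right arrow ($i\in I_\Gamma$, $v_{i-1}=v_i=j$) makes $s$ the identity on $W(k)/p^{j+1}$ and $t$ multiplication by $p$; horizontal-left is dual. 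A down-right arrow ($i\in I_\Gamma$, $v_{i-1}=j$, $v_i=j-1$) has $s\colon W(k)/p^{j+1}\twoheadrightarrow W(k)/p^j$ (not injective) and $t\colon W(k)/p^j \to W(k)/p^{j+1}$ via multiplication by $p$ (injective); down-left is dual. The four cases match exactly the arrow descriptions in (1)--(2).

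For existence, given a basic $M$, I pick generators $x_i\in M_i$ inductively lifting the canonical basis of $M/(p)=M(m,I)$: let $x_0$ be any lift of the generator of $M_0$; for $i\in I$, set $x_i:=sx_{i-1}$; for $i\notin I$, pick any $y\in M_i$ lifting the generator $\bar x_i$, write $ty=x_{i-1}+pz$ (possible since $\bar t\bar y=\bar x_{i-1}$ in $M(m,I)$), and put $x_i:=y-sz$; this yields $tx_i=x_{i-1}$ and $\bar x_i=\bar y$, using that $\bar s\bar z=0$ in $M(m,I)$ when $i\notin I$. Since $st=ts=p$, the dual relations $tx_i=px_{i-1}$ ($i\in I$) and $sx_{i-1}=px_i$ ($i\notin I$) follow automatically, matching \eqref{eq:staction}. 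Next I argue $M_i=W(k)x_i$ by induction on $i$: the base case $i=0$ is the observation $M_0\cong k$; for $i\geq 1$, the inductive hypothesis gives $M_{i-1}=W(k)x_{i-1}$, so $tM_i\subseteq W(k)x_{i-1}$, hence $pM_i=s(tM_i)\subseteq s(W(k)x_{i-1})\subseteq W(k)x_i$, and combined with $W(k)x_i+pM_i=M_i$ (from basicness) this forces $M_i=W(k)x_i$. Letting $v_i+1$ be the $p$-adic order of $x_i$, the well-definedness of the semilinear $s,t$-actions on the cyclic quotients, together with the relations \eqref{eq:staction}, restricts $v_i$ to $\{v_{i-1},v_{i-1}-1\}$ when $i\in I$ and to $\{v_{i-1},v_{i-1}+1\}$ when $i\notin I$; so $(m,I,v)$ assembles into a basic Hopf graph $\Gamma$ and $x_i\mapsto x_i$ extends to an isomorphism $M\cong M(\Gamma)$.

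Uniqueness from the injectivity data is then immediate: the combination of ``$i\in I$ or not'' (read off from $M/(p)$) together with the injectivity profile of the pair $(s,t)$ pinpoints the arrow type between columns $i-1$ and $i$, and the starting value $v_0=0$ with these transitions determines the whole sequence $v$, hence $\Gamma$ and $M(\Gamma)$. The only delicate step in this plan is the cyclicity induction for $M_i$; this is where the observation $pM_i=s(tM_i)$ combined with the base case $M_0\cong k$ does all the work, while everything else is a direct translation between the graph combinatorics and the module relations.
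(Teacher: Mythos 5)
Your proof follows the same route as the paper's: inductively lift the canonical mod-$p$ generators to elements $x_i \in M_i$ satisfying \eqref{eq:staction}, then let the relation $p=st=ts$ constrain the torsion orders $v_i$ and assemble $(m,I,v)$ into a graph. Several of your steps are in fact more detailed than the printed proof: the induction $pM_i = s(tM_i) \subseteq W(k)x_i$, combined with $M_i = W(k)x_i + pM_i$, cleanly establishes cyclicity of each $M_i$, which the paper only asserts; and your four-case analysis of arrow types, together with your final paragraph, supplies the ``Moreover'' clause, which the paper leaves implicit.

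However, there is a genuine gap in the case $m<\infty$: you never show that $M_i = 0$ for $i > m$, and consequently you cannot verify the terminal condition $v_m = 0$, which is part of what it means for $(m,I,v)$ to assemble into a basic Hopf graph and is also needed for $x_i \mapsto x_i$ to be an isomorphism onto $M(\Gamma)$, a module that vanishes above degree $m_\Gamma$. The constraints you derive --- $v_i \in \{v_{i-1}-1,v_{i-1}\}$ for $i \in I$ and $v_i \in \{v_{i-1},v_{i-1}+1\}$ for $i \notin I$ --- come only from well-definedness of $s$ and $t$ between consecutive degrees $\leq m$ and do not force $v_m=0$: for $(m,I)=(1,\emptyset)$ they admit $v=(0,1)$, which is not a basic Hopf graph, so the last sentence of your existence argument does not follow as written. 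The repair uses only tools you already deploy: since $M_{m+1}/pM_{m+1} = M(m,I)_{m+1} = 0$, one has $M_{m+1} = pM_{m+1} = s(tM_{m+1}) \subseteq W(k)\, sx_m \subseteq M_{m+1}$, so $M_{m+1}$ is cyclic and equal to $p$ times itself, hence zero (if $y$ generates and $y = p\lambda y$, then $(1-p\lambda)y = 0$ with $1-p\lambda$ a unit in $W(k)$); then $px_m = t(sx_m) = 0$ forces $v_m = 0$, and $M_i = pM_i = s(tM_i) \subseteq sM_{i-1} = 0$ follows for $i > m+1$ by induction. To be fair, the paper's proof also treats this point tersely, but it does at least assert the vanishing $M_i=0$ for $i>m$; in your write-up the statement is absent and the final assembly step silently depends on it.
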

\begin{proof}
It is evident from the definition that $M(\Gamma)/(p) \cong M(m_\Gamma,I_\Gamma)$, so the ``only if'' direction is clear.

Let $M$ be a basic $\DRing$-module with $M/(p) \cong M(m,I)$. Then $M_i=W_{v_i+1}(k)$ for some positive integers $v_i$ for $i\leq m$ and $M_i=0$ for $i>m$. Note that the condition $p=st=ts$ implies that $v_i \in \{v_{i-1}-1,v_i,v_{i+1}\}$ for all $i \geq 0$.

We inductively construct lifts $\tilde x_i \in M_i$ of the defining generators $x_i \in M(m,I)_i$ and show that the $v_i$ satisfy the conditions for being $v_\Gamma$ of a basic Hopf graph $\Gamma$ with $(m,I)=(m_\Gamma,I_\Gamma)$. Since $tM_0=0$ for dimensional reasons, $pM_0=0$ as well, so $M_0=k$ and $v_0=0$. Let $\tilde x_0$ be the unique lift of $x_0$.

Suppose now that $\tilde x_0,\dots,\tilde x_{i-1}$ are constructed satisfying \eqref{eq:staction}. If $i \in I$, define $\tilde x_i = s(\tilde x_{i-1})$. Then $\tilde x_i$ is a lift of $x_i$ and hence a generator, and we have that $t\tilde x_i = ts\tilde x_{i-1}=p\tilde x_{i-1}$, so \eqref{eq:staction} is satisfied for the index $i$. Since $s\colon M_{i-1} \to M_i$ is an surjection (it maps a generator to a generator), we cannot have $v_i=v_{i-1}+1$.

Conversely, if $i \not \in I$, but $i \leq m$, $t\colon M_i/(p) \to M_{i-1}/(p)$ is bijective, hence $t\colon M_i \to M_{i-1}$ is surjective. Let $\tilde x_i$ be any preimage of $\tilde x_{i-1}$ under $t$. Then $\tilde x_i$ is a lift of $x_i$ and hence a generator. Again, $v_i=v_{i-1}-1$ is not possible because of the surjectivity of $t\colon M_i \to M_{i-1}$.
\end{proof}

\begin{proof}[Proof of Theorem~\ref{thm:basichopfclassification}]
Given $\Gamma$, define $H(\Gamma) = (D^{(1)})^{-1}(M(\Gamma))$, using the $p$-typical Dieudonné functor of type $1$ from \eqref{eq:ptypicaldieudonne}. By construction, $H(\Gamma)/[p] \cong H(0,m_\Gamma,I_\Gamma)$. For any $p$-typical Hopf algebra $H$ of type $1$ with $M=D^{(1)}(H)$, we have that
\begin{align*}
D^{(1)}(F\colon H_{p^i} \to H_{p^{i+1}}) &= s\colon M_i \to M_{i+1}\\
\intertext{and}
D^{(1)}(V\colon H_{p^{i+1}} \to H_{p^i}) &= t\colon M_{i+1} \to M_{i}\\
\end{align*}
The claimed classification of basic Hopf algebras thus follows directly from Theorem~\ref{thm:classificationofbasicmodules}. Moreover, let $H$ be any Hopf algebra whose mod-$p$ reduction is indecomposable. Let $r$ be the smallest positive integer such that $H_r \neq 0$. Then by Theorem~\ref{thm:modpclassification}, $H$ is concentrated in degrees $rp^j$. Denote by $H'$ the regraded Hopf algebra with $H'_j = H_{rj}$. Then $M'=D^{(1)}(H') \cong M(\Gamma)$ is basic, and $H \cong H(r,\Gamma)$.
\end{proof}

It seems that most Hopf algebras one encounters ``in nature'' are tensor products of basic ones. We will not attempt to support this claim with many examples, but one important one is the following:

\begin{example}\label{ex:gammap}
Consider the Hopf algebra $H=H^*(BU;k) \cong k[c_1,c_2,\dots]$, where $BU$ is the classifying space of the infinite unitary group and $c_i$ are the universal Chern classes in degree $2i$. By dimensional considerations, $H$ splits into a tensor product of one $p$-typical part of type $j$ for each $p \nmid j$. The type-1 part $\Lambda_p$ is the Hopf algebra representing the $p$-typical Witt vector functor. It is basic: $\Lambda_p \cong H(\Gamma)$ for 
\[
\Gamma\colon \begin{matrix}\begin{tikzpicture}[scale=0.5]
\draw[gray,very thin] (0,0) grid (4,4);
\draw[stealth-] (0,0) -- (1,1);
\draw[stealth-] (1,1) -- (2,2);
\draw[stealth-] (2,2) -- (3,3);
\draw[dotted] (3,3) -- (4,4);
\end{tikzpicture}
\end{matrix}
\]
\end{example}

\begin{lemma}
A Hopf algebra $H$ is free as a commutative algebra iff the Frobenius $F$ on $DH$ is injective. It is cofree as a cocommutative coalgebra iff the Verschiebung $V$ on $DH$ is surjective.
\end{lemma}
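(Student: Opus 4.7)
The plan is to translate both equivalences via the Dieudonn\'e functor into statements about the $p$th power map and the Verschiebung on $H$ itself, and then invoke Borel's classical structure theorem for commutative Hopf algebras together with its coalgebra dual. By Lemma~\ref{lemma:splitting}, both conditions split along the $p$-typical decomposition, so I may restrict attention to a single $p$-typical summand $H \in \Hopf^{(j)}$.

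The key input, implicit in the construction of $D$, is that $F\colon D(H)_i \to D(H)_{pi}$ corresponds under $D$ to the $p$th power map on $H_i$, while $V\colon D(H)_{pi} \to D(H)_i$ corresponds to the cocommutative Verschiebung on $H$, extracted as the Frobenius-twisted ``$p$th root'' of the symmetric iterated coproduct $\Delta^{p-1}\colon H \to H^{\otimes p}$. Granting this, the first statement reduces to the claim that a connected graded commutative Hopf algebra over a perfect field of characteristic $p$ is free as an algebra iff its $p$th power map is injective. The forward implication is trivial, since a polynomial ring is an integral domain. For the converse I would appeal to Borel's structure theorem, which decomposes $H$ as an algebra into a tensor product of monogenic pieces of the form $k[y]$ and $k[y]/(y^{p^n})$; the $p$th power map is injective exactly when no truncated factor appears, that is, when $H$ is polynomial.

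For the second statement I would argue formally dually, either by invoking the Borel-type decomposition of connected graded cocommutative coalgebras over a perfect field into tensor products of divided power coalgebras and their conilpotent truncations (with cofreeness and surjectivity of the Verschiebung both equivalent to the absence of truncated factors), or by passing to finite-dimensional sub-Hopf algebras and using Hopf duality, which exchanges free algebras with cofree coalgebras and swaps $F$ with $V$ on Dieudonn\'e modules, converting injectivity into surjectivity via finite-dimensional linear duality.

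The principal obstacle is that Borel's theorem is customarily stated for Hopf algebras of finite type, whereas the lemma makes no such assumption. I would bridge this by a standard exhaustion: any abelian Hopf algebra is the filtered colimit of its finitely generated sub-Hopf algebras, and both injectivity of Frobenius and the existence of a compatible polynomial presentation descend to, and extend from, this filtered system. The coalgebra direction is handled analogously using the coradical filtration, which at each finite stage reduces the claim to the finite-dimensional case where Hopf duality is unambiguous.
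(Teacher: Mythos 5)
Your proposal is correct and takes essentially the same route as the paper: both identify $F$ and $V$ on $DH$ with the $p$th power (Frobenius) map and the Verschiebung on $H$ itself, and both invoke the Milnor--Moore/Borel classification of algebra structures on connected graded commutative Hopf algebras to conclude that freeness as an algebra is equivalent to injectivity of Frobenius, with the cofree/Verschiebung case handled dually. The differences are cosmetic: your preliminary $p$-typical reduction is unnecessary, and your exhaustion over finitely generated sub-Hopf algebras addresses the finite-type hypothesis in Borel's theorem, a point the paper's citation of \cite{milnor-moore:hopf} passes over silently.
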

\begin{proof}
The $p$th power map of $H$ is a $\frob$-twisted $k$-linear Hopf algebra morphism $F\colon H \to H$ inducing the maps $F\colon DH \to DH$ of Dieudonné modules. If $H$ is free as a commutative algebra, $F$ will therefore be injective. Conversely, suppose $H$ is not free as a commutative algebra. Every Hopf algebra over a field is a directed colimit of Hopf algebras that are finitely generated as algebras. If $\{x_i\}$ is a minimal set of algebra generators for $H$, then there exists a polynomial $q$ in the $x_i$ which is zero in $H$. Since this polynomial only involves finitely many of the $x_i$, it is contained in a finitely generated Hopf algebra. Thus it suffices to show that $F$ is noninjective in the case where $H$ is a finite type. By Borel's theorem on the classification of algebra stuctures on connected, graded Hopf algebras \cite[Theorem~7.11]{milnor-moore:hopf}, a Hopf algebra of finite type that is not free as an algebra has a noninjective Frobenius. The argument for the Verschiebung is dual.
\end{proof}

This gives a simple characterization of when a basic Hopf graph corresponds to a free resp. cofree Hopf algebra:
\begin{corollary}
The Hopf algebra $H(\Gamma)$ is
\begin{itemize}
	\item free as an algebra if $\Gamma$ is infinite and consists of horizontal right arrows and diagonal left arrows only;
	\item cofree as a coalgebra if $\Gamma$ is infinite and consists of horizontal left arrows and diagonal left arrows only.
\end{itemize}
\end{corollary}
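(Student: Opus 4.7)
The plan is to transport the preceding lemma through the $p$-typical Dieudonné equivalence of type $1$: $H(\Gamma)$ is free as an algebra iff the map $s\colon M(\Gamma)_{i-1} \to M(\Gamma)_i$ is injective for every $i \ge 1$, and $H(\Gamma)$ is cofree as a coalgebra iff $t\colon M(\Gamma)_i \to M(\Gamma)_{i-1}$ is surjective for every $i \ge 1$ with $M(\Gamma)_{i-1}\neq 0$. This reduces the corollary to an arrow-by-arrow, module-theoretic check on $M(\Gamma)$.

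For each of the four arrow types in the definition of a basic Hopf graph I would compute $s$ and $t$ directly from \eqref{eq:staction}, using the description $M(\Gamma)_i = W_{v_i+1}(k)\langle x_i\rangle$ and the fact that $W_n(k)$ is local with maximal ideal $pW_n(k)$ for $n\ge 1$. In the horizontal right case ($i \in I_\Gamma$, $v_i = v_{i-1}$), $s$ sends the generator to the generator and is an isomorphism, while $t(x_i) = px_{i-1}$ has image $pM(\Gamma)_{i-1}\neq M(\Gamma)_{i-1}$ and so is not surjective; the horizontal left case ($i\notin I_\Gamma$, $v_i=v_{i-1}$) is dual. In the diagonal right-down case ($i \in I_\Gamma$, $v_i = v_{i-1}-1$), the map $s$ is essentially the truncation $W_{v_i+2}(k) \twoheadrightarrow W_{v_i+1}(k)$, with nontrivial kernel, and $t$ again lands in $pM(\Gamma)_{i-1}$. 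In the diagonal left-down case ($i \notin I_\Gamma$, $v_i = v_{i-1}+1$), one checks that $s\colon W_{v_i}(k) \to W_{v_i+1}(k)$, $\lambda\mapsto p\,\sigma(\lambda)$, is injective, and that $t$ sends the generator $x_i$ to the generator $x_{i-1}$ and is therefore surjective.

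Tabulating the outcomes, $s$ is injective at every column of $M(\Gamma)$ iff only horizontal right and diagonal left-down arrows appear in $\Gamma$, and $t$ is surjective at every column iff only horizontal left and diagonal left-down arrows appear. Combined with the first paragraph, this is exactly the statement of the corollary.

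The only genuinely nonroutine point is the injectivity computation for $s$ in the diagonal left-down case, because $s$ is $\sigma$-semilinear and the source and target have different Witt lengths; one needs that $\sigma$ descends to a bijection on each $W_n(k)$, which uses perfectness of $k$. Everything else amounts to bookkeeping of the generator-level formulas in \eqref{eq:staction}.
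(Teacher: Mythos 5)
Your route is the intended one, and it is essentially the argument the paper leaves implicit: transport the preceding lemma through $D^{(1)}$ so that freeness as an algebra becomes injectivity of $s$ on $M(\Gamma)$ and cofreeness as a coalgebra becomes surjectivity of $t$, then check the four arrow types against \eqref{eq:staction}. Your four local computations are correct (including the semilinear point in the diagonal left-down case), and you rightly noticed that the second bullet needs \emph{surjectivity} of $t$, which is not what Theorem~\ref{thm:classificationofbasicmodules} records (it records injectivity of $t$), so the direct computation you carry out there is genuinely needed rather than a citation.

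There is, however, a real gap in the tabulation step, at the top end of a \emph{finite} graph. If $m_\Gamma<\infty$, then $M(\Gamma)_{m_\Gamma}\cong k\neq 0$ while $M(\Gamma)_{m_\Gamma+1}=0$, so $s\colon M(\Gamma)_{m_\Gamma}\to 0$ is never injective and $t\colon 0\to M(\Gamma)_{m_\Gamma}$ is never surjective, regardless of which arrows occur; your claim that ``$s$ is injective at every column iff only horizontal right and diagonal left-down arrows appear'' ignores this column, since no arrow sits there. Concretely, the basic Hopf graph consisting of a single horizontal right arrow from $(0,0)$ to $(1,0)$ satisfies your (and the corollary's) hypothesis, yet $H(\Gamma)\cong k[x_0]/(x_0^{p^2})$ is visibly not free as an algebra (Frobenius kills $x_0^p$); dually, the single horizontal left arrow gives a Hopf algebra that is not cofree as a coalgebra. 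Note that a finite basic Hopf graph with only horizontal right and diagonal left arrows is forced, by monotonicity of $v_\Gamma$ together with $(v_\Gamma)_0=(v_\Gamma)_{m_\Gamma}=0$, to be an all-horizontal chain, so these truncated examples are exactly the cases your ``iff'' misses. The statement is thus only correct when $m_\Gamma=\infty$ --- this is an imprecision already present in the corollary as printed, but your write-up asserts the false equivalence explicitly. To repair it, either impose $m_\Gamma=\infty$ as a standing hypothesis, or add the observation that everywhere-injectivity of $s$ (resp.\ everywhere-surjectivity of $t$) forces $m_\Gamma=\infty$ because of the boundary column; with that restriction your argument is complete, and it is consistent with how the corollary is used afterwards, e.g.\ for $\Lambda_p$ in Example~\ref{ex:gammap}.
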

Thus the only basic Hopf algebra that is both free as an algebra and cofree as a coalgebra is $\Lambda_p$ of Example~\ref{ex:gammap}.

\begin{thm} \label{thm:freeorcofree}
Let $H$ be a Hopf algebra whose mod-$[p]$ reduction is pure-injective. If $H$ is free as an algebra or cofree as a coalgebra then $H$ is a tensor product of basic Hopf algebras.
\end{thm}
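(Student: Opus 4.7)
The plan is to work entirely on the Dieudonn\'e side. By the preceding lemma, $H$ is free as an algebra iff $s$ acts injectively on $M := DH$, and cofree as a coalgebra iff $t$ acts surjectively on $M$. After applying the $p$-typical splitting (Lemma~\ref{lemma:splitting}) and the equivalence $\Lambda^{(j)}$, the theorem reduces to the following statement: every nonnegatively graded, countable-dimensional $\DRing$-module $M$ on which $s$ acts injectively (resp. on which $t$ acts surjectively) decomposes as a direct sum of suspensions of basic modules $M(\Gamma)$. I will focus on the $s$-injective case; the $t$-surjective case is analogous (or obtained by an opposite-module duality that exchanges the roles of $s$ and $t$).

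My approach parallels Section~\ref{sec:p-torsion-hopf-alg} in the integral setting. Given any nonzero $x_0 \in M$ in minimal-connectivity degree, I first construct a basic $\DRing$-submodule $N \leq M$ containing $x_0$ by the inductive procedure from the proof of Theorem~\ref{thm:classificationofbasicmodules}: set $x_i := s x_{i-1}$ when $i \in I$ (nonzero by $s$-injectivity), and choose $x_i$ to be some preimage of $x_{i-1}$ under $t$ when $i \not\in I$; the pair $(m,I)$ is read off at the step where the construction halts. Theorem~\ref{thm:classificationofbasicmodules} then identifies $N$ with $M(\Gamma)$ for a basic Hopf graph $\Gamma$ whose arrows must all be horizontal-right or diagonal-down-left, in line with the corollary characterizing free-as-algebra basic Hopf algebras, because $s$ is globally injective on the ambient $M$.

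The key technical step is an integral analogue of Theorem~\ref{thm:indecomposablesplitoff}: such an $N$, chosen ``initial'' for $x_0$ in the same lexicographic order on pairs $(m,I)$, admits a retraction in $M$. I would adapt the three-step proof of that theorem. First define an orthogonal submodule $V_M(\Gamma) \leq M$ as the sum of images of $\DRing$-module maps from truncations $M(\Gamma)_a^b \to M$ whose image avoids the generator chain of $N$. Then build a $k$-linear complement $M'$ of $N \oplus V_M(\Gamma)$ in $M$, chosen compatibly with $t$-preimages at indices $i \not\in I$ and containing $s M'_{i-1}$ at indices $i \in I$. Finally verify that $V_M(\Gamma) \oplus M'$ is closed under $s$ and $t$. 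The $p$-heights of elements are controlled by Theorem~\ref{thm:classificationofbasicmodules}, and $s$-injectivity on $M$ sufficiently restricts the maps out of $M(\Gamma)_a^b$ to salvage the closure computation.

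Given the integral splitting, a countable double induction mimicking the proof of Corollary~\ref{cor:indecdecompositionmodp} exhibits $M$ as a direct sum of basic submodules $M(\Gamma_\alpha)$; translating back via the Dieudonn\'e equivalence yields the desired tensor-product decomposition of $H$. Uniqueness follows from Gabriel's Krull--Schmidt--Azumaya theorem (Theorem~\ref{thm:gabriel-krull-schmidt}), since each $M(\Gamma)$ has local endomorphism ring (a quotient of $W(k)$). The main obstacle is the third step of the adapted splitting proof: the mod-$p$ argument used crucially that $ts = 0$, which no longer holds integrally, so $s$-injectivity must be carefully leveraged to replace that relation and guarantee that $V_M(\Gamma) \oplus M'$ is stable under both operators.
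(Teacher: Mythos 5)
Your reduction to the Dieudonn\'e side and the identification of freeness with injectivity of $s$ match the paper's starting point, but the heart of your argument --- the ``integral analogue of Theorem~\ref{thm:indecomposablesplitoff}'' --- is not proved, only announced, and it is exactly where all of the difficulty sits. You flag the obstacle yourself ($ts=0$ fails over $\DRing$) but do not resolve it, and in fact the three-step proof you propose to adapt breaks earlier and more fundamentally than Step 3: over $\DRing/(p)$ every graded piece is a $k$-vector space, so the complements $M'_i$ of Step 2 always exist, whereas over $\DRing$ the graded pieces are $p$-torsion $W(k)$-modules such as $W_n(k)$, and a submodule of such a module need not admit any complement (e.g.\ $pW_2(k)\subset W_2(k)$); so ``choose a complement of $N_i$'' is not an available move, and a merely $k$-linear complement is useless because the final splitting must be one of $W(k)$-modules. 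Your construction of the candidate initial submodule $N$ is also circular: the recipe branches on whether $i\in I$, yet $I$ is supposed to be read off from the recipe; and since $s$-injectivity makes $sx_j\neq 0$ always, the mod-$p$ rule (``take $sx_j$ if nonzero'') would always produce $I=\N$ --- integrally the relevant dichotomy is whether $sx_j$ generates the cyclic piece or lies in $pM_{j+1}$. Finally, before citing Theorem~\ref{thm:classificationofbasicmodules} to identify $N$ with some $M(\Gamma)$ you must prove $N$ is basic (cyclic graded pieces, $N/(p)\cong M(m,I)$), which is not automatic for a submodule of an arbitrary $M$.

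The paper avoids the integral splitting problem entirely, and this is how your argument can be repaired: apply the already-proved mod-$p$ classification (Corollary~\ref{cor:indecdecompositionmodp}) to $M/(p)$, lift that decomposition to a surjection $\bigoplus_i\gamma_i\colon\bigoplus_i \Sigma^{r_i}M(\Gamma_i)\to M$ (lifting generators degree by degree; $t$-preimages exist because $pM\subseteq tM$), and then show this surjection is injective by induction on degree: if $\sum_i\alpha_i x_{i,j}=0$, linear independence mod $p$ forces $p\mid\alpha_i$; writing $p=st$ and using injectivity of $s$ gives $\sum_i\frac{\alpha_i}{p}\,tx_{i,j}=0$, and since $tx_{i,j}\in\{x_{i,j-1},px_{i,j-1}\}$ the inductive hypothesis yields $\alpha_ix_{i,j}=s\bigl(\tfrac{\alpha_i}{p}tx_{i,j}\bigr)=0$. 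This is where $s$-injectivity does its work: in a short linear-independence induction, not in a reconstruction of the retraction machinery of Section~\ref{sec:p-torsion-hopf-alg}. The cofree case then follows by dualization, as you say.
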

\begin{proof}
Without loss of generality, suppose that $H$ is $p$-typical and $M$ its associated $\DRing$-module.
Choose a decomposition
\[
\bigoplus_{i=1}^n\bar\gamma_i\colon \bigoplus_{i=1}^n \Sigma^{r_i} M(m_i,I_i) \xrightarrow{\cong} M/(p)
\]
as in Theorem~\ref{thm:modpclassification}, where $n \in \N \cup \{\infty\}$. By Theorem~\ref{thm:basichopfclassification}, there are basic Hopf graphs $\Gamma_i$ and maps 
\[
\gamma_i\colon \Sigma^{r_i} M(\Gamma_i) \to M
\]
such that $\bigoplus_{i=1}^n \gamma_i$ is a surjective map and $\bar\gamma_i = \gamma_i/(p)$.

We denote the defining generator of $\Sigma^{r_i} M(\Gamma_i)$ in degree $j$ by $x_{i,j}$ and identify it with its image in $M$ under $\gamma_i$. To see that $\bigoplus_{i=1}^n \gamma_i$ is an injection, we need to show that
\begin{equation}\label{eq:linindep}
\sum_{i=1}^n \alpha_i x_{i,j} = 0 \Longrightarrow \alpha_i x_{i,j}=0 \text{ for all } \alpha_i \in k.
\end{equation}
Here we implicitly assume that only finitely many $\alpha_i$ are nontrivial if $n=\infty$.

We show this by induction in the degree $j$. For $j=0$, $M(\Gamma_i)_0 \cong M(m_i,I_i)_0$ is $p$-torsion, and by definition, the nonzero $x_{i,0}$ are linearly independent. Assume that \eqref{eq:linindep} holds for $j-1$. Since modulo $p$, the $x_{i,j}$ are linearly independent, we have that $p \mid \alpha_i$ for all $i$. But $p=st$, and since $s$ is injective by assumption, we have that
\[
\sum_{i=1}^n \frac{\alpha_i}p tx_{i,j}=0.
\]
We have that $tx_{i,j} \in \{x_{i,j-1},px_{i,j-1}\}$, so by induction, $\frac{\alpha_i}p tx_{i,j}=0$ and hence $s(\frac{\alpha_i}ptx_{i,j})=\alpha_i x_{i,j}=0$ for all $i$.

This completes the proof that $\bigoplus_{i=1}^n \gamma_i$ is an isomorphism.

The result for Hopf algebras which are cofree as coalgebras follows from dualization.
\end{proof}

\begin{proof}[Proof of Theorem~\ref{thm:freeorcofreeconcrete}]
Given Theorem~\ref{thm:freeorcofree}, the only thing that remains is to classify those basic Hopf algebras of mod-$p$ type $(m,I)$ which are free as an algebras (the other case being dual). This corresponds to $s$ being injective. The characterization of basic $\DRing$-modules with injective $s$ from Theorem~\ref{thm:classificationofbasicmodules} shows that there is exactly one basic Hopf graph $\Gamma$ with this property for any given $(m,I)$.
\end{proof}

We will now compare the property of $H$ being free as an algebra to two stronger conditions. The first condition is being a projective object. The second condition is, morally, to be free over a graded, connected coalgebra. However, this notion is unnecessarily restrictive and is incompatible with the property of being $p$-typical -- a $p$-typical Hopf algebra can never be free in this sense. Instead, we consider the property of being free over a \emph{$p$-polar} graded coalgebra \cite{bauer:p-polar,bauer:p-polar-iterated-loops}. 

\begin{defn}
A \emph{graded $p$-polar $k$-coalgebra} is a graded vector space $C$ together with a $k$-linear map
\[
\Delta\colon C_{pn} \to \Bigl(C_n \otimes \cdots \otimes C_n\Bigr)^{\Sigma_p}
\]
which, with this structure, is a retract of a graded $k$-coalgebra.
\end{defn}

The dual definition of this notion was given in \cite{bauer:p-polar-iterated-loops}, along with the dual version of the following theorem:

\begin{thm}
The functor which associates to a graded $k$-coalgebra the free graded Hopf algebra over it factors naturally through the category of $p$-polar coalgebras.
\end{thm}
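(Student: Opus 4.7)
My plan is to construct the factoring functor $\tilde F\colon (\text{$p$-polar coalgebras}) \to \Hopf$ together with a natural isomorphism $\tilde F \circ U \cong F$, where $U$ is the forgetful functor to $p$-polar coalgebras. The argument is the dualization of the proof in \cite{bauer:p-polar-iterated-loops} of the analogous statement for cofree Hopf algebras on $p$-polar algebras.

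First I would recall the concrete description of $F$: the free Hopf algebra on a cocommutative coalgebra $C$ is the symmetric algebra $S(C)$, equipped with the unique comultiplication extending $\Delta_C$ multiplicatively, and one verifies this satisfies the required universal property against the forgetful $\Hopf \to (\text{coalgebras})$. Given a $p$-polar coalgebra $P$, I would define $\tilde F(P)$ on the underlying algebra $S(P)$ by prescribing a comultiplication from the $p$-polar data: the iterated diagonal $\Delta^{(p-1)}\colon \tilde F(P) \to \tilde F(P)^{\otimes p}$, restricted to generators $x \in P_{pn}$, is declared to be $\Delta_P(x) \in (P_n^{\otimes p})^{\Sigma_p}$ plus the usual primitive contributions, and this is extended multiplicatively. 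Functoriality in $p$-polar morphisms follows from their definitional compatibility with $\Sigma_p$-invariants.

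To verify $\tilde F(U(C)) \cong F(C)$ naturally in $C$, I would observe that both Hopf algebras share the underlying algebra $S(C)$ and differ only in the comultiplication on generators $c \in C$. The natural isomorphism is constructed by an ascending-degree change of variables $c \mapsto c + q_c$, where $q_c \in S(C)_{\geq 2}$ is a universal polynomial expression of degree $|c|$ in strictly lower-degree elements of $C$, chosen so that the new reduced comultiplication of $c+q_c$ in $F(C)$ matches the one prescribed in $\tilde F(U(C))$. The key observation is that the reduced comultiplication of a product such as $c_1\cdot c_2 \in S(C)$ produces precisely the cross term $c_1\otimes c_2 + c_2\otimes c_1$ (plus contributions of higher tensor-degree in $S(C)\otimes S(C)$), and analogously for longer products, and these span exactly the non-$p$-polar components of $\bar\Delta_C(c)$. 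Naturality in $C$ is automatic, since the $q_c$ are assembled from a universal polynomial formula in the comultiplication data of $C$.

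The principal technical obstacle is the explicit verification that the non-$p$-polar components of $\bar\Delta_C(c)$ always lie in the image of $\bar\Delta_{S(C)}$ restricted to polynomial expressions of degree $\geq 2$ in strictly lower-degree generators, together with a canonical choice of preimage yielding functorial $q_c$. This is a concrete, degree-by-degree computation with the reduced comultiplication in the symmetric algebra over a perfect field of characteristic $p$, carried out in the dual setting in \cite{bauer:p-polar-iterated-loops}; also nontrivial is checking that the coassociativity of the $\Delta$ on $\tilde F(P)$ defined above is implied by the retract condition built into the definition of $p$-polar coalgebras.
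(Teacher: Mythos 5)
The most serious gap is that your functor $\tilde F$ on $p$-polar coalgebras is never actually defined. A $p$-polar coalgebra $P$ carries only the $p$-fold structure map $\Delta_P\colon P_{pn} \to (P_n^{\otimes p})^{\Sigma_p}$; it has no binary comultiplication. Declaring the value of the iterated diagonal $\Delta^{(p-1)}$ on generators does not define a comultiplication $\Delta\colon S(P) \to S(P)\otimes S(P)$: a Hopf algebra structure requires the binary map, and for odd $p$ the polar data does not even suggest a candidate for the components of $\bar\Delta(x)$ in bidegrees such as $P_n \otimes S(P)_{(p-1)n}$, let alone in bidegrees $S(P)_a \otimes S(P)_b$ with $a$ not a multiple of $n$. (For $p=2$ there is an obvious candidate, but then coassociativity becomes the problem.) You defer coassociativity to ``the retract condition built into the definition,'' but this is exactly where the argument breaks: if $P \xrightarrow{i} U(C) \xrightarrow{r} P$ is a retract, then iterating $\Delta_P$ inserts the idempotent $e = i\circ r \neq \mathrm{id}$ between the stages, so no coassociativity-type identity for $\Delta_P$ follows formally, and without one your multiplicative extension is not a coalgebra. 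The retract condition must be used differently: first show that the honest free functor $F$ is functorial for $p$-polar morphisms \emph{between honest coalgebras} (this is the real content, and it is where your change-of-variables analysis belongs), then define $\tilde F(P)$ by splitting the idempotent $F(i\circ r)$ in the abelian category $\Hopf$, where idempotents split. Your outline never performs this step, and without it the object $\tilde F(P)$ does not exist.

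Second, the central claim of your change-of-variables step --- that cross terms $c_1\otimes c_2 + c_2 \otimes c_1$ of products span ``exactly the non-$p$-polar components of $\bar\Delta_C(c)$'' --- is imprecise even for $p=2$ and does not address odd $p$. For $p=2$, products of two elements of equal degree $n$ contribute norm elements \emph{inside} the retained polar component $(C_n\otimes C_n)^{\Sigma_2}$, so changes of variables do not cleanly separate polar from non-polar parts; one must instead match the two comultiplications exactly, by an induction whose compatibility with coassociativity and, crucially, whose naturality in $C$ (the whole point of the factorization statement) you assert via an unexhibited ``universal polynomial formula.'' For odd $p$, the retained datum is a component of the $(p-1)$-fold iterated diagonal while \emph{every} binary component of $\bar\Delta_C(c)$ is discarded, so the binary cross-term bookkeeping does not set up the required correspondence at all. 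For context: the paper itself offers no proof of this theorem --- it quotes it as the formal dual of a theorem proved in \cite{bauer:p-polar-iterated-loops} --- so a self-contained argument along your lines would be new, but as written it does not go through.
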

In other words, there exists a free Hopf algebra functor on $p$-polar coalgebras, and the usual free Hopf algebra functor on coalgebras really only depends on the underlying $p$-polar coalgebra structure. This free functor sends $p$-typical $p$-polar coalgebras (in the obvious meaning) to $p$-typical Hopf algebras of the same type.

\begin{prop}
Let $H=H(r,\Gamma)$ be an irreducible Hopf algebra. Then
\begin{enumerate}
	\item $H$ is free on a graded, connected $p$-polar coalgebra iff $\Gamma$ has $m= \infty$, $I_H = \{ i \mid i \geq n\}$, $(v_H)_i = \min(i,n)$ for $0 \leq n \leq \infty$ (Figs.~\ref{fig:cofreeonfinite} and \ref{fig:cofreeoninfinite})

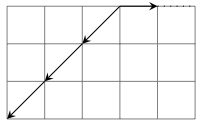
\begin{figure}[ht]
\begin{tikzpicture}[scale=0.5]
\draw[gray,very thin] (0,0) grid (5,3);
\draw[stealth-] (0,0) -- (1,1);
\draw[stealth-] (1,1) -- (2,2);
\draw[stealth-] (2,2) -- (3,3);
\draw[-stealth] (3,3) -- (4,3);
\draw[dotted] (4,3) -- (5,3);
\end{tikzpicture}
\caption{$m=\infty$, $I = [4,\infty]$, $v_H = (0,1,2,3,3,3,\dots)$}\label{fig:cofreeonfinite}
\end{figure}

\begin{figure}[ht]
\begin{tikzpicture}[scale=0.5]
\draw[gray,very thin] (0,0) grid (5,3);
\draw[stealth-] (0,0) -- (1,1);
\draw[stealth-] (1,1) -- (2,2);
\draw[dotted] (2,2) -- (3,3);
\end{tikzpicture}
\caption{$m=\infty$, $I = \emptyset$, $v_H = \{0,1,2,3\dots\}$}\label{fig:cofreeoninfinite}
\end{figure}
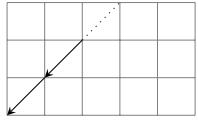
\item $H$ is projective if, in addition, $p \nmid r$.
\end{enumerate}
\end{prop}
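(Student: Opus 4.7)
The plan is to work at the level of Dieudonné modules via the $p$-typical equivalence $D^{(j)}$, translating the condition ``$H$ is free on a graded, connected $p$-polar coalgebra'' into a structural condition on the basic module $M(\Gamma)$. The target graphs, parameterised by $0 \le n \le \infty$, correspond to $\DRing$-modules with a ``truncated Witt-vector'' shape: $t$-surjections in the first $n$ degrees followed by $s$-injections preserving Witt level in higher degrees.

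For the ``if'' direction of part~(1), I would exhibit the required $p$-polar coalgebra $C = C_{r,n}$ explicitly. Take $C$ to have a one-dimensional space spanned by $c_i$ in each degree $rp^i$ for $0 \le i \le n$ and zero elsewhere, with $p$-polar comultiplication $\Delta(c_i) = c_{i-1}^{\otimes p}$ (the unique $\Sigma_p$-invariant generator of $C_{rp^{i-1}}^{\otimes p}$) for $i \ge 1$ and $\Delta(c_0) = 0$. The retract-of-a-coalgebra condition needs to be checked: for $n = \infty$ this is essentially the assertion that $\Lambda_p$ exists as an honest Hopf algebra with the stated underlying $p$-polar coalgebra, and for $n < \infty$ one can embed $C$ into a truncated divided-power coalgebra and split. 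A direct Dieudonné computation then identifies $\operatorname{Free}(C_{r,n})$ with $H(\Gamma)$: the $c_i$ produce the defining generators $x_i$ of $M(\Gamma)_i$, the $p$-polar comultiplication dualises under $D$ to $t(x_i) = x_{i-1}$ for $1 \le i \le n$ (the diagonal-down-left arrows), and algebra-freeness forces the $s$-pattern above degree $rp^n$ to be the horizontal-right sequence. The case $n = \infty$ recovers $\Lambda_p$ of Example~\ref{ex:gammap}, and $n = 0$ recovers the polynomial Hopf algebra on a primitive generator.

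For the ``only if'' direction, the factorisation theorem in the text tells us that the free Hopf algebra on a $p$-polar coalgebra is free as a commutative algebra on the underlying graded vector space of $C$. Hence $H$ is free as an algebra, and by the earlier corollary $\Gamma$ consists only of horizontal-right and diagonal-down-left arrows. It remains to show that the diagonals strictly precede the horizontals: the $p$-polar coalgebra $C$ embeds into $H$ via the unit of the free--forgetful adjunction, and in Dieudonné terms the image of $c_i$ generates $M(\Gamma)_i$, with the $p$-polar comultiplication $\Delta(c_i) = c_{i-1}^{\otimes p}$ forcing $t(c_i) = c_{i-1}$ up to a unit. Once a horizontal-right arrow first appears at some position $i_0$, the generator $c_{i_0}$ is $t$-primitive in $D(H)$; any subsequent diagonal-down-left arrow would then demand $t(c_{i_0+1}) = c_{i_0}$, incompatible with $c_{i_0+1}$ lying in the image of $\Delta_C$ valued in a symmetric tensor power of $C_{rp^{i_0}}$, since that image would have to map into a subspace on which $t$ vanishes. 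This forces all arrows after position $i_0$ to be horizontal-right.

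For part~(2), when $p \nmid r$, I would prove projectivity by showing that $H(r,\Gamma)$ represents an exact functor on $\Hopf$. For $n = \infty$, $H(r,\Gamma) \cong \Lambda_p$ (suitably suspended) represents the $p$-typical-Witt-vectors-in-degree-$r$ functor, which under the Dieudonné equivalence becomes the exact functor $M' \mapsto M'_0$ on type-$r$ modules; for finite $n$ one gets an analogous truncated-Witt-vector representation. Exactness of the represented functor together with the Yoneda lemma yields projectivity. The main difficulty I expect is in the ``only if'' direction of~(1): precisely controlling how the symmetric-power-valued $p$-polar comultiplication of $C$ dictates the Verschiebung of $D(H)$, so as to rigorously exclude interleaved configurations. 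The explicit construction of the free functor from $p$-polar coalgebras to Hopf algebras in \cite{bauer:p-polar-iterated-loops} should provide the necessary bookkeeping.
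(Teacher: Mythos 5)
Your route is genuinely different from the paper's, and in its present form it has real gaps. The paper proves part (1) by quoting prior work wholesale: by \cite{kuhn:quasi-shuffle}, every finitely generated $\DRing/(s)$-module $M$ is realized as the indecomposables of a Hopf algebra $H(M)$; by \cite{bauer:p-polar-iterated-loops}, the Hopf algebras free on $p$-polar coalgebras are exactly these $H(M)$, each determined up to (noncanonical) isomorphism by $M$, and $H(-)$ takes direct sums to tensor products. Irreducibility then forces $QH$ to be an indecomposable graded $k[t]$-module, and the only indecomposables, $k[t]/(t^{n+1})$ and $k[t,t^{-1}]/k[t]$, give precisely the two displayed graph families. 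Your ``if'' direction replaces this with an explicit construction, which is sound in outline: $C_{r,n}$ is indeed a polar retract of a truncated divided-power coalgebra, and its free Hopf algebra is $H(\Gamma_n)$. But every property of the free functor you invoke --- that $\operatorname{Free}(C)$ is free as an algebra on $\bar C$, that $D(\operatorname{Free}(C))$ is generated by the images of the $c_i$, and that $V$ on those images is computed from $\Delta_C$ --- must be imported from \cite{bauer:p-polar-iterated-loops}; none of it follows from the factorization theorem as stated in this paper. So you are re-deriving by hand fragments of the same reference the paper cites for its stronger, ready-made uniqueness statement.

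The genuine gap is the ``only if'' direction, which you yourself flag as the main difficulty: what you wrote is not yet an argument. The step ``$t(c_{i_0+1})=c_{i_0}$'' does not typecheck: the $c_i$ live in $C\subseteq H$ while $t$ acts on $M=D(H)$, and there is no elementwise comparison between the two. The only place where such an identification exists is on indecomposables, $\bar C\cong QH\cong M/sM$, and exactly there the contradiction evaporates: at the first horizontal position $i_0$ one has $Q_{i_0}=0$, so $V$ induces the zero map $Q_{i_0+1}\to Q_{i_0}$ both in a free Hopf algebra and in the interleaved $H(\Gamma')$, and nothing is contradicted. The contradiction is real but sits strictly below the level of indecomposables. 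For instance, for the graph of Fig.~\ref{fig:hopfgraphtwo}, write $H(\Gamma')=k[y_0,y_2,y_3,\dots]$ with $|y_j|=p^j$; one can show that $V(y_2)=\lambda y_0^p$ with $\lambda\neq 0$ --- nonzero but \emph{decomposable} --- whereas freeness over $C$ forces $V(\iota(c_2))=0$ on the nose, since $\Delta_C(c_2)\in (C_p^{\otimes p})^{\Sigma_p}=0$. Proving $\lambda\neq 0$ (and its analogue for general interleavings) is the missing step; it requires, e.g., noting that $\operatorname{im}(V)$ is a sub-Hopf algebra with $D(\operatorname{im}V)=\operatorname{im}(t)$ by exactness of $D$ and running a connectivity comparison, or better, a presentation of $D(\operatorname{Free}(C))$ as the $\DRing$-module generated by $\bar C$ with $t$-relations given by $\Delta_C$, from which irreducibility forces the generators into a single $t$-chain. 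Either way, this is exactly the ``bookkeeping'' you defer to the literature, and it is the proof of the harder direction.

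Part (2) contains an error for $n=\infty$: the functor represented by $M(\Gamma_\infty)$ is not $N\mapsto N_0$ but $N\mapsto\varprojlim(\cdots\to N_1\xrightarrow{t}N_0)$; for $N=k$ concentrated in degree $0$ this limit vanishes (correspondingly there are no nonzero Hopf maps $\Lambda_p\to k[x]/(x^p)$). This functor is only left exact, and projectivity of $\Lambda_p$ is precisely a $\varprojlim^1$-vanishing statement --- which is the content of \cite[Th\'eor\`eme 3.2]{schoeller:Hopf}, the citation the paper uses. Your computation does work for finite $n$, where $\operatorname{Hom}(M(\Gamma_n),N)\cong N_n$ because every $y\in N_n$ automatically satisfies $p^{n+1}y=s^{n+1}t^{n+1}y=0$; note this also explains the hypothesis $p\nmid r$, since for $r'>0$ the suspension $\Sigma^{r'}M(\Gamma_n)$ represents $N\mapsto\{y\in N_{n+r'}\mid t^{n+1}y=0\}$, which is no longer exact.
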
 
\begin{proof}
\begin{enumerate}
	\item In \cite{kuhn:quasi-shuffle}, Kuhn showed that for every finitely generated $\DRing/(s)$-module $M$, there exists a Hopf algebra $H(M)$ with indecomposables $M$, and in \cite{bauer:p-polar-iterated-loops}, the author showed that $H(M)$ is free on a $p$-polar coalgebra and (noncanonically) unique with this property. Moreover, the construction $H$ sends direct sums to tensor products.
	Indecomposable $\DRing/(s)$-modules are suspensions of modules of the form $k[t]/(t^{n+1})$, with corresponding Hopf algebra isomorphic to $H(\Gamma_n)$ with $\Gamma_n$ as displayed in Fig.~\ref{fig:cofreeonfinite} for $n=4$, or of the form $k[t,t^{-1}]/k[t]$, with corresponding Hopf algebra $H(\Gamma_\infty)$ as displayed in Fig.~\ref{fig:cofreeoninfinite}.
	\item This is well-known, e.g. by \cite[Th\'eor\`eme 3.2]{schoeller:Hopf}.
\end{enumerate}
\end{proof}

\section{Some examples}

In this section, we collect a few examples that show that our results are sharp in many respects.

\begin{example}\label{ex:gradingisessential}
This example is to show that the grading is crucial and that, in particular, the connectedness assumption on $H$ cannot be dropped.

Suppose $H$ is an ungraded abelian Hopf algebra over a perfect field $k$. By \cite{fontaine:groupes-divisibles}, $H$ naturally splits into $H^u \otimes H^m$, where $H^u$ denotes the unipotent part (also known as conilpotent) and $H^m$ is the part of multiplicative type. The latter is classified by an abelian group $A$ with a continuous action of the absolute Galois group $\Gamma$ of $k$; if the $\Gamma$-action is trivial then the associated Hopf algebra is just the group ring $k[A]$. So a classicifation of $H^m$, under some finiteness assumptions, is feasible. Note that connected, graded Hopf algebras are automatically unipotent for degree reasons.

Dieudonné theory works as expected for ungraded Hopf algebras (at least when $H^m$ represents a functor taking values in abelian $p$-groups), and unipotent Hopf algebras correspond to Dieudonné modules on which $V$ acts nilpotently. Let $W$ be an $n$-dimensional $\F_p$-vector space, $\phi\colon W \to W$ an endomorphism, and construct a Dieudonné module $M_\phi=V \oplus V$ with $V(x,y) = (0,x)$ and $F(x,y) = (0,\phi(x))$.
Then it is easy to see that $M_\phi \cong M_{\psi}$ if and only if $\phi$ and $\psi$ are conjugate, and $M_\phi$ is decomposable iff $W$ has a $\phi$-invariant direct sum decomposition. So already this finite-dimensional example shows that the moduli of indecomposable modules is rather large and, in an imprecise meaning, positive-dimensional (it grows with the size of $k$).
\end{example}

\begin{example}\label{ex:uncountableex}
Products of pure-injective modules are pure-injective, but pure-injective $R$-modules decompose as a sum of chain modules by Thm.~\ref{thm:pureinjclassification}. This leads, for instance, to a direct sum decomposition into uncountable many chain modules of the module
\[
M=\prod_{i=0}^\infty R/(t,s^i)\langle x_i\rangle,
\]
where the generators $x_i$ have degree $i$. 
\end{example} 

\begin{example}\label{ex:nonpureinj:cont}
We revisit Example~\ref{ex:nonpureinjective} with the non-pure-injective submodule $NPI(\sigma)$ of the pure-injective module $PI(\sigma)$. The inclusion $NPI(\sigma) \to PI(\sigma)$ is pure by Lemma~\ref{lemma:puritycharacterization}. 
Indeed, if $f\colon NPI(\sigma) \to M$ is any morphism into a pure-injective module $M$ then $f$ can be extended to $\overline{NPI}(\sigma)$ by sending the infinite chain of $x_{\infty,*}$ to any chain in $\Ch_\sigma^+(f(x_0))$, which must exist because $f(x_0)$ is a limit element. Thus $\overline{NPI}(\sigma)$ is the \emph{pure-injective hull} of $NPI(\sigma)$.
The quotient, $\overline{NPI}(\sigma)/NPI(\sigma)$, is the pure-injective chain module $\Ch(\sigma|_1^\infty)$, which again is pure-injective.

I do not know of an example of an $R$-module of pure-injective dimension greater than $1$.
\end{example}

\begin{example}\label{ex:largeindecomposables}
In this example, we will construct arbitrarily large indecomposable $\DRing$-modules of finite type (and hence Hopf algebras), showing in particular that non-$p$-torsion Hopf algebras do not decompose into tensor products of basic Hopf algebras.

Let $\Gamma_i$ be the basic Hopf graph
\[
\begin{tikzpicture}[scale=0.5]
\draw[gray,very thin] (0,0) grid (7,1);
\draw[stealth-] (0,0) -- (1,0);
\draw[-stealth] (1,0) -- (2,0);
\draw[stealth-] (2,0) -- (3,0);
\draw[dotted] (3,0) -- (4,0);
\draw[stealth-] (4,0) -- (5,0);
\draw[stealth-] (5,0) -- (6,1);
\draw[-stealth] (6,1) -- (7,0);
\end{tikzpicture}
\]
with $m = i+4$.

One checks readily that there are no nontrivial maps $M(\Gamma_i) \to M(\Gamma_j)$ unless $i=j$.

There is a unique nontrivial extension $0 \to \Sigma k \to M_i \to M(\Gamma_i) \to 0$ for each $i$, resulting in a basic Hopf algebra with graph
\[
\begin{tikzpicture}[scale=0.5]
\draw[gray,very thin] (0,0) grid (7,1);
\draw[stealth-] (0,0) -- (1,1);
\draw[-stealth] (1,1) -- (2,0);
\draw[stealth-] (2,0) -- (3,0);
\draw[dotted] (3,0) -- (4,0);
\draw[stealth-] (4,0) -- (5,0);
\draw[stealth-] (5,0) -- (6,1);
\draw[-stealth] (6,1) -- (7,0);
\end{tikzpicture}
\]
Thus $\Ext^1(M(\Gamma_i),\Sigma k) \cong k\langle \alpha_i\rangle$. For any $N>0$, the class
\[
(\alpha_1,\dots,\alpha_N) \in \Ext^1\Bigl(\bigoplus_{i=1}^N M(\Gamma_i),\Sigma k\Bigr) \cong \bigoplus_{i=1}^N \Ext^1(M(\Gamma_i),\Sigma k)
\]
thus defines an $\DRing$-module $M$. I claim that this module is indecomposable. We use the following result, which is a special case of \cite[Theorem~2.3]{hassler-wiegand:big-indecomposable}:
\begin{lemma}
Let $Q$ be a finitely generated $R$-module of positive depth, $T$ an indecomposable finitely generated $R$-module of finite length, and $\alpha \in \Ext^1(Q,T)$. Suppose that for each $f\in \End(Q)-\{0\}$, $f^*\alpha \neq 0$. Then $\alpha$ represents an indecomposable module.
\end{lemma}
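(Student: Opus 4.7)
The plan is to convert a hypothetical nontrivial direct-sum decomposition $M \cong M_1 \oplus M_2$ of the middle term of an extension
\[
0 \to T \to M \xrightarrow{\pi} Q \to 0
\]
representing $\alpha$ into a nonzero idempotent $f \in \End(Q)$ with $f^*\alpha = 0$, contradicting the hypothesis.

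First I would establish the vanishing $\Hom_R(T,Q)=0$: the image of any $R$-linear map $T \to Q$ has finite length, but the assumption that $Q$ has positive depth means every $Q$-regular element forces the socle of $Q$ to vanish (equivalently, $\Hom(R/\mathfrak m,Q)=0$), so $Q$ contains no nonzero finite-length submodule. Now assume $M = M_1 \oplus M_2$ with both summands nonzero, and let $e \in \End(M)$ be projection onto $M_1$. By the vanishing just established, the composite $T \hookrightarrow M \xrightarrow{e} M \xrightarrow{\pi} Q$ is zero, so $e(T) \subseteq T$, giving an idempotent $e_T \in \End(T)$. Because $T$ is indecomposable of finite length, $\End(T)$ is local by Fitting's lemma, so $e_T \in \{0,1\}$; after replacing $e$ by $1-e$ if necessary (which would swap the roles of $M_1$ and $M_2$), we may assume $e_T = 0$.

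Then $e$ vanishes on $T$ and hence factors as $e = \tilde e \circ \pi$ for a unique $\tilde e\colon Q \to M$. Set $f = \pi \circ \tilde e \in \End(Q)$. A direct check shows $f$ is an idempotent, and $\tilde e$ is a section for $\pi$ over the pullback $f^*\alpha$, so this pullback extension splits; hence $f^*\alpha = 0$. By hypothesis $f=0$, which forces $\pi \circ e = 0$, i.e., $M_1 = e(M) \subseteq \ker\pi = T$. Combined with $e|_T = 0$ this forces $M_1 = 0$, contradicting the assumed decomposition.

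The main obstacle is not in the formal argument but in pinning down the exact meaning of "positive depth" used in \cite{hassler-wiegand:big-indecomposable} and verifying it applies in the graded $\DRing$-module setting of the example, where one needs $Q = \bigoplus_{i=1}^N M(\Gamma_i)$ to contain no nonzero finite-length submodule. This is straightforward from the explicit presentation of each $M(\Gamma_i)$, since each such basic module is torsion-free with respect to a suitable regular element built out of $s$ and $t$. Once that input is confirmed, the remaining argument is purely formal and proceeds exactly as sketched above.
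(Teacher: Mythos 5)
Your proof of the lemma itself is correct, but it takes a genuinely different route from the paper: the paper does not prove this lemma at all, it simply cites it as a special case of Theorem~2.3 of Hassler--Wiegand. Your argument is the natural self-contained one, and every step checks out: positive depth gives $\operatorname{Hom}_R(T,Q)=0$ (a nonzero finite-length submodule of $Q$ would contribute to its socle, which a $Q$-regular element in the maximal ideal forbids); hence any idempotent $e\in\End(M)$ satisfies $e(T)\subseteq T$; Fitting's lemma makes $\End(T)$ local, so after swapping the summands you may take $e|_T=0$; then $e=\tilde e\circ\pi$, and $f=\pi\circ\tilde e$ satisfies $f^*\alpha=0$ because $f$ lifts along $\pi$ (exactness of $\operatorname{Hom}(Q,M)\to\operatorname{Hom}(Q,Q)\to\Ext^1(Q,T)$, where the second map is $g\mapsto g^*\alpha$); the hypothesis forces $f=0$, so $M_1=e(M)\subseteq\ker\pi=T$, and $e|_T=0$ gives $M_1=e(M_1)=0$, a contradiction. (You do not even need $f$ to be idempotent; only the dichotomy $f\neq 0$ versus $f^*\alpha=0$ is used.) What your approach buys is that the paper's example becomes independent of an external reference, and it isolates the one real hypothesis; what the citation buys is brevity.

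However, the closing paragraph of your proposal contains a genuine error, and it is relevant to how the paper invokes the lemma. You claim that each $M(\Gamma_i)$ in the example is ``torsion-free with respect to a suitable regular element built out of $s$ and $t$,'' so that $Q=\bigoplus_{i=1}^N M(\Gamma_i)$ has positive depth. This is false: each $M(\Gamma_i)$ there is concentrated in finitely many degrees with finite-length graded pieces, so $Q$ has finite length over $\DRing$ and its depth is zero. Concretely, the degree-zero generator $x_0$ of each summand is killed by $p$, $s$ and $t$ (one has $sx_0=px_1=0$ because $M(\Gamma_i)_1\cong k$), so $k\langle x_0\rangle$ is a nonzero finite-length submodule of $Q$, and no element of the graded maximal ideal $(s,t)$ is $Q$-regular. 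The correct repair is exactly the observation your own proof makes available: the only use of positive depth is to guarantee $\operatorname{Hom}_R(T,Q)=0$, so the lemma holds verbatim with ``positive depth'' weakened to this hypothesis; and in the example $\operatorname{Hom}(\Sigma k,Q)=0$ holds by inspection, since $t$ acts injectively on $Q_1$ (each generator $x_{i,1}$ has $tx_{i,1}=x_{i,0}\neq 0$) while the generator of $\Sigma k$ is killed by $t$. With that substitution your proof covers the paper's application; with the hypothesis as literally stated, it does not.
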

In our case, $Q=\bigoplus_{i=1}^N M(\Gamma_i)$, $T = \Sigma k$, and $\alpha = (\alpha_1,\dots,\alpha_k)$. We have that $\End(Q) \cong k^N$ because there are no nontrivial maps $M(\Gamma_i) \to M(\Gamma_j)$ for $i \neq j$. For $f=(f_1,\dots,f_N) \in \End(Q)$, we have that $f^*\alpha = (f_1\alpha_1,\dots,f_N\alpha_N)$ and since all $\alpha_i$ are nontrivial, $f^*\alpha \neq 0$.
\end{example}

\bibliographystyle{alpha}
\bibliography{bibliography}

\end{document}